\documentclass[11pt,reqno,oneside]{amsart}
\usepackage{graphics}
\usepackage[dviwin]{color}
\vsize 27.9truecm
\hsize 21.59truecm
\setlength{\oddsidemargin}{0.5cm}
\setlength{\evensidemargin}{0.0cm}
\setlength{\topmargin}{-0.0cm}
\setlength{\headheight}{0.0cm}
\setlength{\textheight}{23.5cm}
\setlength{\textwidth}{15.5cm}

\def\Box{\setlength{\unitlength}{0.01cm}
     \begin{picture}(15,15)(-5,-5)
       \framebox(15,15){}
     \end{picture} }

\newtheorem{theorem}{Theorem}[section]
\newtheorem{lemma}{Lemma}[section]
\newtheorem{corollary}{Corollary}[section]
\newtheorem{remark}{Remark}[section]
\newtheorem{defi}{Definition}[section]

\def\o{\Omega}
\def\g{\gamma}
\def\a{\alpha}
\def\b{\beta}
\def\bs{\bigskip}

\def\m{\mu}

\def\uu{{\bf u}}
\def\L{{\bf L}}
\def\vv{{\bf v}}
\def\ff{{\bf f}}
\def\ww{{\bf w}}
\def\rr{{\bf r}}
\def\Z{{\mathbb {Z}}}
\def\R{{\mathbb R}}

\def\W1p{W^{1,p}(\o)}

\def\di{\mbox{div\,}}

\def\ve{\varepsilon}

\def\cu{\mbox{{\bf curl}\,}}


\begin{document}
\section*{}

\setcounter{equation}{0}
\title[]{A right inverse of the divergence for\\ planar H\"older-{\LARGE
$\alpha$} domains}

\author[R. G. Dur\'an]{Ricardo G. Dur\'an}
\address{Departamento de Matem\'atica\\ Facultad de
Ciencias Exactas y Naturales\\Universidad de Buenos Aires\\1428 Buenos Aires\\
Argentina.}
\email{rduran@dm.uba.ar}

\author[F. L\'opez Garc\'\i a]{Fernando L\'opez Garc\'\i a}
\address{Departamento de Matem\'atica\\ Facultad de
Ciencias Exactas y Naturales\\Universidad de Buenos Aires\\1428 Buenos Aires\\
Argentina.}
\email{flopezg@dm.uba.ar}

\keywords{Divergence Operator, planar H\"older-$\alpha$ domains, Stokes
equations}

\subjclass{Primary: 26D10 , 35Q30 ; Secondary 76D03}

\begin{abstract} If $\Omega\subset\R^n$ is a bounded domain, the existence
of solutions ${\bf u}\in H^1_0(\Omega)^n$
of $\mbox{div\,} {\bf u} = f$ for $f\in L^2(\Omega)$
with vanishing mean value, is a basic result in the analysis of the Stokes
equations.
In particular it allows to show the existence of a solution
$({\bf u},p)\in H^1_0(\Omega)^n\times L^2(\Omega)$, where ${\bf u}$ is the velocity and $p$ the pressure.

It is known that the above mentioned result holds when $\Omega$ is a Lipschitz
domain and that it is not valid for arbitrary H\"older-$\alpha$ domains.

In this paper we prove that if $\Omega$ is a planar simply connected
H\"older-$\alpha$ domain, there exist right inverses of the
divergence which are continuous in appropriate weighted spaces,
where the weights are powers of the distance to the boundary.
Moreover, we show that the powers of the distance in the results
obtained are optimal.

In our results, the zero boundary condition is replaced by a weaker
one. For the particular case of domains with an external cusp of
power type, we prove that our weaker boundary condition
is equivalent to the standard one.
In this case we show the well posedness of the Stokes equations
in appropriate weighted Sobolev spaces obtaining as a consequence
the existence of a solution $({\bf u},p)\in H^1_0(\Omega)^n\times L^r(\Omega)$
for some $r<2$ depending on the power of the cusp.
\end{abstract}
\maketitle

\section{Introduction}
\label{intro}
\setcounter{equation}{0}

Let $\o\in\R^n$ be a bounded open domain. We will use standard notations for
Sobolev spaces and, for $1<p<\infty$, $L_0^p(\o)$ will denote the subspace
of functions in $L^p(\o)$ with vanishing mean value.

The existence of right inverses
of the operator $\di : H_0^1(\o)^n\to L_0^2(\o)$
is a basic result for the theoretical and numerical analysis of the Stokes
equations
by variational methods (see for example \cite{BS,BF,GR,T}). This result is also
closely connected
with the Korn inequality which is fundamental in the analysis of the elasticity
equations
(see \cite{HP}).

Usually, the problem is stated as follows: for any $f\in L_0^2(\o)$ there exists
$\uu\in H_0^1(\o)^n$ such that

\begin{equation}
\di\uu=f \qquad \mbox{in}\ \ \o
\label{divu=f}
\end{equation}
and
\begin{equation}
\|\uu\|_{H_0^1} \le C \|f\|_{L^2}
\label{apriori}
\end{equation}
where, here and throughout the paper, the letter $C$ denotes a
generic constant.

This problem, as well as its generalization to the $L^p$ case, has been
widely analyzed and several different arguments have been given to prove it
under different assumptions on the domain. We refer the reader for example
to \cite{ASV,BB,BS,B,GR,DM2}. Recently, in \cite{ADM}, the existence of right
inverses
of the divergence acting on $W_0^{1,p}(\o)^n$, $1<p<\infty$, was proved for
the so called John domains, which form a large class containing properly
the Lipschitz domains. Moreover, for the particular case of planar simply
connected
domains, it is shown in \cite{ADM} that being a John domain is also a necessary
condition in the case $1<p<2$.
In particular, there exist
bounded domains and values of $p$ for which continuous right inverses
of the divergence do not exist.

Actually, this fact was previously well known, indeed,
several arguments have been given to show it. For example,
in the old paper \cite{F1}, Friedrichs
proved that, for smooth planar domains, the $L^2$-norm of the conjugate of a
harmonic
function $f$ (normalized in an appropriate way) is bounded by the $L^2$-norm of
$f$ times a constant depending only on the domain. Moreover, he showed
that this inequality is not valid if the domain has an external cusp of
quadratic type.
It is easy to see that the Friedrichs inequality can be deduced from the
existence of $\uu$ satisfying (\ref{divu=f}) and (\ref{apriori}). Therefore,
such
a $\uu$ cannot exist for that kind of domains.
More recently other examples have been given in \cite{GG} and in an unpublished
work of Gabriel Acosta. Acosta's examples are very elementary
and applies to external
cusps of power type with any power $\g>1$ and any $1<p<\infty$. We
also refer the reader
to \cite{D1} where a particular case has been reproduced and to \cite{ADL}.

In view of the above mentioned results, it seems natural to ask
whether some weaker results can be proved for a more general class of domains.
Moreover, if this is the case,
can those results be applied to show the well posedness of the Stokes problem
in appropriate Hilbert spaces?

In this paper we give some partial answers to these questions in the particular
case of planar simply connected domains. We consider H\"older $\a$ domains,
with $0<\a\le 1$. We say that a domain belongs to this class if
its boundary is locally the graph of a H\"older $\a$ function.
For these domains we prove the existence of solutions of
(\ref{divu=f}) satisfying weaker estimates than (\ref{apriori})
involving weighted norms where the weights are powers of
the distance to the boundary. For general H\"older $\a$ domains
the zero boundary condition will be imposed in a weak way. Afterwards, in some
particular
examples, we will show that this weak boundary condition agrees with
the usual one.

Our approach use some of the ideas of the papers \cite{ADL,GK}.
The existence of solutions of the divergence is derived from appropriate Korn
type inequalities. The weighted Korn inequalities that we need are slight
variants
of those obtained in \cite{ADL} but we include the proofs for the sake of
completeness.

Although our arguments to derive
the existence of right inverses of the divergence are two dimensional,
we write the proofs of the Korn type inequalities in the general $n$-dimensional
case because they have interest in themselves.

The rest of the paper is organized as follows.
In Section \ref{korn} we introduce some notations and prove the weighted
Korn inequalities. Section \ref{right inverse} deals with our main results
concerning the existence of right inverses of the divergence continuous in
appropriate weighted norms for H\"older $\a$ domains. In Section
\ref{external cusps}
we apply the results of the previous section for the particular case
of domains having power type external cusps. We show that in this case
our weak zero boundary condition agrees with the usual one. Also in this section
we prove optimality of our results. In Section \ref{application} we show how our
results can be applied to prove the well posedness of the Stokes equations in
appropriate
Hilbert spaces.
Recall that, if $\uu$ is the velocity and $p$ the pressure of a viscous
incompressible
fluid, the Stokes equations are given by
\begin{align}
\label{stokes}
\begin{cases}
-\Delta\uu\,+\,\nabla p&=\,\ff\hspace*{1cm}{\rm in}\ \o\\
\di\uu&=\,0\hspace*{1cm}{\rm in}\ \o\\
\uu&=\,0\hspace*{1cm}{\rm in}\ \partial\o.
\end{cases}
\end{align}
The existence of solutions of (\ref{divu=f}) satisfying (\ref{apriori}) can be
used to prove that, for $\ff\in H^{-1}(\o)^2$, there exists a unique solution
$(\uu,p)\in H^1_0(\o)^2 \times L_0^2(\o)$ of (\ref{stokes}) and moreover,
$$
\|\uu\|_{H^1_0(\o)} + \|p\|_{L^2(\o)}
\le C \|\ff\|_{H^{-1}(\o)}.
$$
We will show that our right inverses of the divergence can be used
to prove a slightly weaker result for some cuspidal domains.
In particular, under some restriction on the power of
the cusp, we will obtain the existence of a unique solution
$(\uu,p)\in H^1_0(\o)^2 \times L_0^r(\o)$
of the Stokes equations (\ref{stokes}) satisfying
$$
\|\vv\|_{H^1_0(\o)} + \|p\|_{L^r(\o)}
\le C \|\ff\|_{H^{-1}(\o)}
$$
for some $1<r\le 2$ which depends on the power of the cusp.

\section{Preliminaries and Korn type inequalities}
\setcounter{equation}{0}
\label{korn}

Let $\o$ be a bounded open subset of $\R^n$ and $d(x)$
the distance of $x\in\o$ to the boundary $\partial\o$.
We will denote by $L^p(\o,\g)$ the Banach space given by
the norm
$$
\|u\|_{L^p(\o,\g)}:=\|u\,d^\g\|_{L^p(\o)}
$$
and, analogously, $W^{1,p}(\o,\gamma)$ will be the Banach space
with norm
\begin{eqnarray}
\label{defiw1p}
\|u\|_{W^{1,p}(\o,\g)}:=\|u\,d^\g\|_{L^p(\o)}\,
+\,\|\nabla u \,d^\g\|_{L^p(\o)}.
\end{eqnarray}
Whenever $L^p(\o,\g)\subset L^1(\o)$ we will call $L_0^p(\o,\g)$
the subspace of $L^p(\o,\g)$ formed by functions of vanishing mean value.
Since no confusion is possible we will use the same notations for
the norms of vector or tensor fields.

For a vector field $\uu=(u_1,\cdots,u_n)$ defined in $\o$ we denote by
$D\uu$ the jacobian matrix, namely, $(D\uu)_{ij}=\frac{\partial u_i}{\partial
x_j}$
and by $\ve(\uu)$ its symmetric part (i.e., the linear strain tensor associated
with $\uu$),
that is
$\ve(\uu)_{ij}=\frac12 \left(\frac{\partial u_i}{\partial x_j}
+\frac{\partial u_j}{\partial x_i}\right)$.

\bigskip

We start by giving a weighted Korn inequality for
H\"older $\alpha$ domains. The statement given in the following
theorem is slightly stronger than the result in Theorem 3.1 of \cite{ADL}.
Therefore, we include the proof for the sake of completeness although
the arguments are essentially those given in that reference. In particular
we will make use of the following improved Poincar\'e inequality proved
in \cite[Theorem 2.1]{ADL}.
If $\o$ is a H\"older $\alpha$ domain, $0<\a\le 1$, $B\subset\o$
a ball and $\phi\in C_0^\infty(B)$ is such that $\int_B \phi=1$ then,
for $\a\le\b\le 1$ and $f$ such that $\int_B f\phi=0$
there exists a
constant $C$ depending only on $\o$, $B$ and $\phi$ such that,
\begin{equation}
\label{improved poincare}
\|f\|_{L^p(\o,1-\beta)}
\le C\,\|\nabla f\|_{L^p(\o, 1+\alpha-\beta)}.
\end{equation}

\bigskip

\begin{theorem}
\label{kornpesos}
Let $\o\subset \R^n$ be a H\"older $\a$
domain, $B\subset\o$ a ball and $1<p<\infty$. Then, for
$\a\le\b\le 1$ the following inequality holds,
$$
\|D\uu\|_{L^p(\o,1-\beta)}\,\leq\,
C\Bigl\{\|\ve(\uu)\|_{L^p(\o,\alpha-\beta)}+\|\uu\|_{L^p(B)}\Bigr\}
$$
where the constant $C$ depends only on $\o$, $B$ and $p$.
\end{theorem}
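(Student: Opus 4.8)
The plan is to reduce the weighted Korn inequality to the improved Poincaré inequality \eqref{improved poincare} via the classical ``infinitesimal rigid motion'' correction. First I would recall that for any vector field $\uu$ there is a natural way to subtract an infinitesimal rigid motion $\rr(\uu)(x)=\uu_B + W(x-x_B)$, where $\uu_B$ is an average of $\uu$ over $B$ and $W$ is a skew-symmetric matrix chosen so that the average over $B$ of the skew part of $D(\uu-\rr(\uu))$ vanishes. Setting $\vv=\uu-\rr(\uu)$, the second-order derivatives of $\vv$ can be written in terms of first-order derivatives of $\ve(\vv)=\ve(\uu)$ through the well-known identity
\begin{equation*}
\frac{\partial^2 v_i}{\partial x_j\partial x_k}
=\frac{\partial \ve(\vv)_{ij}}{\partial x_k}
+\frac{\partial \ve(\vv)_{ik}}{\partial x_j}
-\frac{\partial \ve(\vv)_{jk}}{\partial x_i}.
\end{equation*}
Because $\vv$ differs from $\uu$ only by a rigid motion, $D\uu=D\vv+W$, so it suffices to bound $\|D\vv\|_{L^p(\o,1-\b)}$ and $\|W\|$ (the latter being controlled by $\|\uu\|_{L^p(B)}$ together with $\|D\vv\|$ on $B$, hence absorbed).

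Next I would apply the improved Poincaré inequality \eqref{improved poincare} componentwise to the entries of $D\vv$. The skew part of $D\vv$ has vanishing weighted-type average over $B$ by construction, and the symmetric part is $\ve(\uu)$; choosing the cutoff $\phi\in C_0^\infty(B)$ as in the hypothesis and using that $\int_B (D\vv)_{ij}\,\phi$ can be arranged to vanish for each $i,j$ (for the symmetric entries one further subtracts a constant, which only changes $\rr(\uu)$ by a translation and is again absorbed into $\|\uu\|_{L^p(B)}$), we get
\begin{equation*}
\|D\vv\|_{L^p(\o,1-\b)}\le C\,\|\nabla(D\vv)\|_{L^p(\o,1+\a-\b)}.
\end{equation*}
Then, using the identity above, $\nabla(D\vv)$ is pointwise bounded by $|\nabla \ve(\uu)|$, so the right-hand side is $\le C\|\nabla\ve(\uu)\|_{L^p(\o,1+\a-\b)}$. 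At this point the estimate is in terms of derivatives of $\ve(\uu)$ rather than $\ve(\uu)$ itself, which is weaker than what the theorem claims; the improvement of this excerpt over Theorem 3.1 of \cite{ADL} is presumably precisely that one can close the argument with $\ve(\uu)$ (weighted by $\a-\b$) instead of $\nabla\ve(\uu)$.

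To remove the extra derivative I would interpose a second ingredient: a weighted estimate of the form $\|\nabla g\|_{L^p(\o,1+\a-\b)}\le C\|g\|_{L^p(\o,\a-\b)}$ applied to $g=\ve(\uu)$ — but this is false in general, so instead the correct route is to avoid ever differentiating $\ve(\uu)$. Concretely, one writes $D\vv$ itself as a sum, over a Boman/Whitney-type chain of balls covering $\o$ (available since a H\"older $\a$ domain supports such a decomposition with the weight $d^{1-\b}$ scaling correctly), of local contributions: on each ball $Q$ the local Korn inequality (the standard, unweighted one on a ball or cube, with constant controlled by the eccentricity) gives $\|D\vv\|_{L^p(Q)}\le C(\|\ve(\uu)\|_{L^p(Q)}+\|\vv-\rr_Q(\vv)\|_{L^p(Q)})$, and the last term telescopes along the chain, each step estimated again by $\ve(\uu)$ on a neighbour, with the distance weights accounting for the $d^{\a-\b}$ versus $d^{1-\b}$ discrepancy exactly because of the H\"older $\a$ geometry. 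Summing over the chain and using a discrete Hardy-type inequality gives the claimed bound
\begin{equation*}
\|D\uu\|_{L^p(\o,1-\b)}\le C\left(\|\ve(\uu)\|_{L^p(\o,\a-\b)}+\|\uu\|_{L^p(B)}\right).
\end{equation*}
The main obstacle is the bookkeeping in this chain argument: one must verify that the H\"older $\a$ condition yields a covering of $\o$ by balls $Q$ with $\mathrm{diam}(Q)\sim d(x_Q)$ along chains whose overlap is bounded and whose ``length'' relative to the boundary distance is controlled by a power matching $\a$, so that the weighted sums converge with the exponents $1-\b$ and $\a-\b$; this is the technical heart, and it is exactly where the hypothesis $\a\le\b\le 1$ is used (to keep the weights summable) and where following \cite{ADL} saves substantial effort.
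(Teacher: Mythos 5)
You correctly diagnose that the naive reduction (subtract a rigid motion, apply the improved Poincar\'e inequality (\ref{improved poincare}) to the entries of the gradient, and use the second-derivative identity) only produces a bound by $\|\nabla\ve(\uu)\|_{L^p(\o,1+\a-\b)}$, and that the derivative-removing estimate $\|\nabla g\|_{L^p(\o,1+\a-\b)}\le C\|g\|_{L^p(\o,\a-\b)}$ is false for general $g$. But the paper's proof closes precisely this gap, and your proposal discards the mechanism. Following Kondratiev--Oleinik \cite{KO}, one first constructs $\vv\in W^{1,p}(\o)^n$ with $\Delta\vv=\Delta\uu$ and $\|\vv\|_{W^{1,p}(\o)}\le C\|\ve(\uu)\|_{L^p(\o)}$, and then subtracts a general linear map $\L$ chosen so that $\int_B D(\uu-\vv-\L)\phi=0$; the bound (\ref{korn1}) controls $D\L$ by $\|\uu\|_{L^p(B)}+\|\ve(\uu)\|_{L^p(\o)}$. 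The remainder $\ww=\uu-\vv-\L$ is harmonic, hence each entry $\ve_{ij}(\ww)$ is harmonic, and for harmonic functions the weighted estimate $\|\nabla f\|_{L^p(\o,1-\m)}\le C\|f\|_{L^p(\o,-\m)}$ \emph{does} hold (\cite{D}, see also Lemma 3.1 of \cite{ADL}); taking $\m=\b-\a$ converts $\|\nabla\ve_{ij}(\ww)\|_{L^p(\o,1+\a-\b)}$ into $\|\ve_{ij}(\ww)\|_{L^p(\o,\a-\b)}$, after which the identity for $\partial^2 w_i/\partial x_j\partial x_k$ together with (\ref{improved poincare}) gives $\|D\ww\|_{L^p(\o,1-\b)}\le C\|\ve(\ww)\|_{L^p(\o,\a-\b)}$. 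In short, the estimate you dismissed as unavailable is available exactly because the subtraction of the Laplace-correcting field makes the relevant functions harmonic; this is the key idea missing from your attempt.

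The route you substitute for it --- a Boman/Whitney chain with local Korn inequalities, telescoping, and a discrete Hardy inequality --- is not carried out: the decisive point, namely that a H\"older $\a$ domain admits chains whose length, overlap and scales are compatible with the weights $d^{1-\b}$ and $d^{\a-\b}$, is precisely what you defer as ``the technical heart.'' It cannot be borrowed from \cite{ADL}, which contains no chain argument for the Korn inequality (it argues via the same harmonic-function device as this paper); and since H\"older $\a$ domains with $\a<1$ are in general not John domains (which is why weights appear at all), that bookkeeping is where essentially all the difficulty lies, so as written the second half of your proposal asserts the theorem rather than proving it. A smaller but genuine flaw in your first half: to apply (\ref{improved poincare}) to every entry of $D\vv$ you must also annihilate the $\phi$-averages of the symmetric entries, which forces subtracting a full linear map rather than ``a translation''; this changes the strain by a constant matrix, which must then be tracked and estimated separately --- exactly the role played by $\L$ and the bound (\ref{korn1}) in the paper's proof.
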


\begin{proof} Following \cite{KO}, we can show that there exists $\vv\in
W^{1,p}(\o)^n$
such that
\begin{eqnarray}
\label{lap}
\Delta\vv=\Delta\uu \hspace*{1cm}{\rm in}\ \o
\end{eqnarray}
and
\begin{eqnarray}
\label{lapn} \|\vv\|_{W^{1,p}(\o)}\,\le\,C\,\|\ve(\uu)\|_{L^p(\o)}.
\end{eqnarray}

Now, let $\phi\in C_0^\infty(B)$ be such that $\int_B\phi\, dx=1$.
For $i=1,...,n$ define the linear functions
$$
L_i(x):=\left(\int_B\nabla(u_i-v_i)\phi\right)\cdot x
$$
and $\L(x)$ as the vector with components $L_i(x)$.

Then,
$$
D\L=\int_B D(\uu-\vv)\phi
$$
and, integrating by parts and applying the H\"older inequality we obtain
$$
|D\L|\leq\|\uu-\vv\|_{L^p(B)}\|\nabla \phi\|_{L^{p'}(B)}
$$
where $p'$ is the dual exponent of $p$.

Therefore, it follows from (\ref{lapn}) that there exists a
constant $C$ depending only on $\o$, $p$ and $\phi$ such that

\begin{equation}
\label{korn1}
\|D\L\|_{L^p(\o)}
\le C\,\Bigl\{\|\uu\|_{L^p(B)}+\|\ve(\uu)\|_{L^p(\o)}\Bigr\}.
\end{equation}
Let us now introduce
$$
\ww:= \uu-\vv-\L.
$$
Then, in view of the bounds
(\ref{lapn}) and (\ref{korn1}), it only remains to estimate $\ww$.
But, from (\ref{lap}) and the fact that $\L$ is
linear we know that
$$\Delta \ww=0$$
and consequently,
$$
\Delta \ve_{ij}(\ww)=0.
$$
But, if $f$ is a harmonic function in $\o$, the following estimate holds
$$
\|\nabla f\|_{L^p(\o,1-\m)} \le C \|f\|_{L^p(\o,-\m)}
$$
for all $\m\in\R$. Indeed, this estimate was proved in \cite{D} (see also
Lema 3.1 in \cite{ADL}, and \cite{KO} for a different proof in the case
$p=2$ and $\m=0$).

Therefore, taking $\mu=\beta-\alpha$ we obtain
$$
\|\nabla\ve_{ij}(\ww)\|_{L^p(\o,1+\a-\beta)}
\le C\|\ve_{ij}(\ww)\|_{L^p(\o,\a-\beta)}
$$
and using the well
known identity

$$
\frac{\partial^2w_i}{\partial x_j\partial x_k}\,=\,
\frac{\partial \ve_{ik}(\ww)}{\partial x_j}\,
+\, \frac{\partial\ve_{ij}(\ww)}{\partial x_k}\,
-\,\frac{\partial\ve_{jk}(\ww)}{\partial x_i}
$$
we conclude that

\begin{eqnarray}
\label{korn2,5} \left\|\frac{\partial^2w_i}{\partial x_j\partial
x_k}\right\|_{L^p(\o,1+\alpha-\beta)}
\leq C\|\ve(\ww)\|_{L^p(\o,\alpha-\beta)}
\end{eqnarray}
for any $i,j$ and $k$.

Since $\int \frac{\partial w_i}{\partial x_j}\phi=0$ (indeed, we have defined
$\L$ in
order to have this property), it follows from the improved Poincar\'e
inequality (\ref{improved poincare}) that
$$
\left\|\frac{\partial w_i}{\partial x_j}\right\|_{L^p(\o,1-\beta)}
\leq C\,\left\|\nabla\frac{\partial w_i}{\partial x_j}\right\|_{L^p(\o,
1+\alpha-\beta)}.
$$
Therefore, using (\ref{korn2,5}), we obtain
\begin{eqnarray*}
\|D\ww\|_{L^p(\o,1-\beta)}\,\leq\,
C\|\ve(\ww)\|_{L^p(\o,\alpha-\beta)}\,\leq\,C\|\ve(\uu)\|_{L^p(\o,\alpha-\beta)}
\end{eqnarray*}
concluding the proof.
\end{proof}

In the following corollary we give a weighted Korn inequality for H\"older
$\a$ domains which can be seen as a generalization of the so-called
second case of Korn inequality. To state this inequality we need to introduce
the space of infinitesimal rigid motions, namely,

$$
{\mathcal N}=\{\vv\in W^{1,p}(\o)^n\, : \, \ve(\vv)=0\}.
$$

\begin{corollary}
\label{kornpeso} Let $\o\subset\R^n$ be a H\"older
$\alpha$ domain and $1<p<\infty$. Then, for $\alpha\leq\beta\le 1$ the
following inequality holds,
\begin{equation}
\label{korn2}
\inf_{\vv\in {\mathcal N}}\|\uu-\vv\|_{W^{1,p}(\o,1-\beta)}\,
\le\,C\|\varepsilon(\uu)\|_{L^p(\o,\alpha-\beta)}.
\end{equation}
\end{corollary}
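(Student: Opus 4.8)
The plan is to deduce \eqref{korn2} from Theorem \ref{kornpesos} in two moves: first subtract from $\uu$ a suitable infinitesimal rigid motion so as to absorb the lower order term $\|\uu\|_{L^p(B)}$ that appears there, and then recover the zeroth order part of the weighted $W^{1,p}$-norm from the improved Poincar\'e inequality \eqref{improved poincare}. Two elementary remarks about the weights are used throughout. Since $\beta\ge\alpha$, the exponent $\alpha-\beta$ is non-positive, so $d^{\alpha-\beta}$ is bounded below by a positive constant on the bounded domain $\o$; hence $\|\ve(\uu)\|_{L^p(B)}\le\|\ve(\uu)\|_{L^p(\o)}\le C\|\ve(\uu)\|_{L^p(\o,\alpha-\beta)}$. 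And since $\alpha\ge 0$ and $\o$ is bounded, $d^{1+\alpha-\beta}\le C\,d^{1-\beta}$ on $\o$, so that $\|g\|_{L^p(\o,1+\alpha-\beta)}\le C\|g\|_{L^p(\o,1-\beta)}$ for every $g$.

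For the first move I would invoke the classical second case of Korn's inequality on the ball $B$ (a smooth, hence Lipschitz, domain, for which this inequality is well known): there is $\vv_0\in{\mathcal N}$ with $\|\uu-\vv_0\|_{W^{1,p}(B)}\le C\|\ve(\uu)\|_{L^p(B)}$. Let $\phi\in C_0^\infty(B)$ with $\int_B\phi=1$ be as in \eqref{improved poincare}, and set $\vv_1:=\vv_0+\int_B(\uu-\vv_0)\,\phi$. A constant vector belongs to ${\mathcal N}$, so $\vv_1\in{\mathcal N}$; moreover $D\vv_1=D\vv_0$ and, by construction, $\int_B(\uu-\vv_1)\,\phi=0$. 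The correction is small: by H\"older's inequality $\bigl|\int_B(\uu-\vv_0)\phi\bigr|\le\|\uu-\vv_0\|_{L^p(B)}\,\|\phi\|_{L^{p'}(B)}\le C\|\ve(\uu)\|_{L^p(\o,\alpha-\beta)}$, and therefore $\|\uu-\vv_1\|_{L^p(B)}\le C\|\ve(\uu)\|_{L^p(\o,\alpha-\beta)}$ as well, where in the last step the weight remark above was used.

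Now apply Theorem \ref{kornpesos} to $\uu-\vv_1$; since $\ve(\uu-\vv_1)=\ve(\uu)$ this yields
\[
\|D(\uu-\vv_1)\|_{L^p(\o,1-\beta)}\le C\bigl\{\|\ve(\uu)\|_{L^p(\o,\alpha-\beta)}+\|\uu-\vv_1\|_{L^p(B)}\bigr\}\le C\|\ve(\uu)\|_{L^p(\o,\alpha-\beta)} .
\]
For the zeroth order term, apply \eqref{improved poincare} to each component $f=(\uu-\vv_1)_i$, which is legitimate because $\int_B f\phi=0$ and $\alpha\le\beta\le 1$, to get $\|(\uu-\vv_1)_i\|_{L^p(\o,1-\beta)}\le C\|\nabla(\uu-\vv_1)_i\|_{L^p(\o,1+\alpha-\beta)}\le C\|\nabla(\uu-\vv_1)_i\|_{L^p(\o,1-\beta)}$. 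Summing over $i$ and combining with the previous display gives $\|\uu-\vv_1\|_{W^{1,p}(\o,1-\beta)}\le C\|\ve(\uu)\|_{L^p(\o,\alpha-\beta)}$, and since $\vv_1\in{\mathcal N}$ this proves \eqref{korn2}.

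The substantive point, the \emph{main obstacle}, is that Theorem \ref{kornpesos} controls only the gradient $D\uu$, whereas \eqref{korn2} also involves the field itself in a weighted norm. Capturing that term forces the use of \eqref{improved poincare}, which in turn requires the normalization $\int_B(\uu-\vv_1)\phi=0$; the technical care lies in arranging this normalization together with the Korn estimate on $B$ (handled by the harmless additive constant above) and in the bookkeeping of the distance weights, which closes exactly because $0\le\alpha$, $\beta\le 1$ and $\o$ is bounded. No compactness argument, and no regularity of $\partial\o$ beyond what is already built into Theorem \ref{kornpesos} and \eqref{improved poincare}, is needed.
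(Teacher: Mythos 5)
Your proof is correct and follows essentially the same route as the paper: subtract a rigid motion normalized so that $\int_B(\uu-\vv)\phi=0$, apply Theorem \ref{kornpesos} together with the weighted Poincar\'e inequality \eqref{improved poincare} for the zeroth-order term, and use the classical second case of Korn's inequality on $B$ to absorb the $L^p(B)$ remainder. The only difference is cosmetic: the paper constructs the rigid motion explicitly through the averages $a_i$ and $b_{ij}$, while you obtain it from the quotient form of Korn's inequality on $B$ plus a constant shift, which changes nothing substantive.
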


\begin{proof} Take $B$ and $\phi$ as in the previous theorem with
$\overline B\subset\o$. Define $\overline x_i=\int_{B}x_i\phi(x)\,dx$
and $\vv\in W^{1,p}(\o)^n$ defined by
$$
v_i(x)=a_i + \sum_{j=1}^n b_{ij}(x_j-\overline x_j)
$$
with
$$
a_i=\int_B u_i\phi
\qquad
\mbox{and} \qquad
b_{ij}=\frac1{2|B|}\int_{B}
\left(\frac{\partial u_i}{\partial x_j}
-\frac{\partial u_j}{\partial x_i}\right).
$$
It is easy to check that $\vv\in{\mathcal N}$.
Now, since $\int_B (\uu-\vv)\phi=0$, it follows from
(\ref{improved poincare}) (actually we are using only a weaker
standard Poincar\'e inequality with weights) and Theorem \ref{kornpesos}
that
$$
\|\uu-\vv\|_{W^{1,p}(\o,1-\beta)}\,
\le\, C\Bigl\{\|\ve(\uu-\vv)\|_{L^p(\o,\alpha-\beta)}
+\|\uu-\vv\|_{L^p(B)}\Bigr\}
$$
and using now the Poincar\'e inequality in $B$ we have
\begin{equation}
\label{casikorn}
\|\uu-\vv\|_{W^{1,p}(\o,1-\beta)}\,
\le\, C\Bigl\{\|\ve(\uu-\vv)\|_{L^p(\o,\alpha-\beta)}
+\|D(\uu-\vv)\|_{L^p(B)}\Bigr\}.
\end{equation}
But,
$$
\int_{B}
\left(\frac{\partial (u-v)_i}{\partial x_j}
-\frac{\partial (u-v)_j}{\partial x_i}\right)
=0
$$
and therefore, the so-called second case of Korn
inequality applied in $B$ gives
$$
\|D(\uu-\vv)\|_{L^p(B)}
\le C \|\ve(\uu-\vv)\|_{L^p(B)}.
$$
Using this inequality in (\ref{casikorn}) and
that $\ve(\vv)=0$ we obtain
$$
\|\uu-\vv\|_{W^{1,p}(\o,1-\beta)}\,
\le\, C\Bigl\{\|\ve(\uu)\|_{L^p(\o,\alpha-\beta)}
+\|\ve(\uu)\|_{L^p(B)}\Bigr\}
$$
which implies (\ref{korn2}) because $\overline B\subset\o$. \end{proof}

\begin{remark} It is possible to prove the above corollary
directly, i.e., without using the Korn inequality in the
ball $B$, by using a standard compactness argument. Indeed,
assuming that (\ref{korn2}) does not hold and using
that $W^{1,p}(\o,1-\beta)$ is compactly embedded
in $L^p(\o,\gamma)$ for any $\gamma>(1-\beta-\alpha)/\alpha$
(see  \cite[Theorem 19.11]{KuOp}) and Theorem \ref{kornpesos}
one obtains a contradiction.
\end{remark}

\section{Right inverse of the divergence in H\"older $\a$ domains}
\setcounter{equation}{0}
\label{right inverse}

This section deals with solutions of divergence
in planar simply connected  H\"older $\a$ domains.
In what follows we restrict ourselves to the case $n=2$.

For regular enough bounded domains $\o$ (for example Lipschitz) it is known
that,
if $f\in L_0^p(\o)$, $1<p<\infty$, there exists
$\uu\in W_0^{1,p}(\o)^2$ such that
\begin{equation}
\label{divu=f1}
div\,\uu=f
\end{equation}
and
\begin{equation}
\label{estimacion}
\|\uu\|_{W_0^{1,p}(\o)}\le C \|f\|_{L^p(\o)}
\end{equation}
where the constant $C$ depends only on $\o$ and $p$.

On the other hand, as we have mentioned in the introduction,
it is known that for general H\"older $\a$ domains
this result is not valid. Our main goal is to prove a similar
result for this kind of domains but using weighted norms.

We will use the following notation. For a scalar function
$\psi$ we write
$\cu\psi=(\frac{\partial\psi}{\partial
x_2},-\frac{\partial\psi}{\partial x_1})$ and for a vector
field $\Psi=(\psi_1,\psi_2)$, $Curl\,\Psi$ denotes
the matrix which has $curl\,\psi_i$ as it rows. Furthermore,
if $\sigma\in L^p(\o)^{2\times 2}$, $Div\,\sigma$ denotes the
vector field with components obtained by taking the divergence of the
rows of $\sigma$.

We will impose the boundary condition in a weak form. To explain
this weak condition observe first that to solve the problem
it is enough to find a solution
$\uu$ of (\ref{divu=f1}) such that the restriction to $\partial\o$
of both components of $\uu$ are constant (whenever the domain is such
that this restriction makes sense). Of course, we should replace the
estimate (\ref{estimacion}) by
\begin{equation}
\label{estimacion2}
\|D\uu\|_{L^p(\o)}\le C \|f\|_{L^p(\o)}.
\end{equation}
Afterwards, (\ref{estimacion}) would follow by applying the Poincar\'e
inequality to the solution obtained by adding an appropriate constant
vector field to $\uu$ in order to obtain the vanishing boundary condition.

Now, assume that $\o$ is a Lipschitz domain. Then, if $\psi\in W^{1,p}(\o)$
satisfies
\begin{equation}
\label{debil1}
\int_\o \cu\psi\cdot\nabla\phi=0 \qquad \forall \phi\in W^{1,p'}(\o)
\end{equation}
it follows by integration by parts that
\begin{equation}
\label{debil2}
\int_{\partial\o}\frac{\partial\psi}{\partial t}\phi=0
\qquad \forall \phi\in W^{1,p'}(\o)
\end{equation}
where $\frac{\partial\psi}{\partial t}$ indicates the tangential derivative of
$\psi$.
Therefore $\frac{\partial\psi}{\partial t}=0$ and then the restriction of $\psi$
to $\partial\o$ is constant.

For a general domain $\o$ the tangential derivative on the boundary might not
even be defined and therefore (\ref{debil2}) would not make sense.
However, condition (\ref{debil1}) is well defined in any domain
and this is the condition that we will use. Therefore we introduce
the space
$$
W_{const}^{1,p}(\o)\subset W^{1,p}(\o)
$$
defined by
$$
W_{const}^{1,p}(\o)=\left\{\psi\in W^{1,p}(\o)\,:\,
\int_\o \cu\psi\cdot\nabla\phi=0 \qquad \forall \phi\in W^{1,p'}(\o)\right\}
$$
and more generally, for any $\g\in\R$,
$$
W_{const}^{1,p}(\o,\g)=\left\{\psi\in W^{1,p}(\o,\g)\,:\,
\int_\o \cu\psi\cdot\nabla\phi=0 \qquad \forall \phi\in W^{1,p'}(\o,-\g)\right\}.
$$

The proof of the following lemma uses ideas introduced in \cite{GK}
with different goals.

For $1<p<\infty$ and $\g\in\R$, $L_{sym}^p(\o,\g)^{2\times 2}$ denotes
the subspace of symmetric tensors in $L^{p}(\o,\g)^{2\times 2}$.

\begin{lemma}
\label{tensorsimetrico}
Let $\o\subset\R^2$ be a H\"older $\a$
domain and $\uu\in W^{1,p}(\o,\beta-1)^2$, with $\alpha\le\beta\le 1$,
such that $\int_\o \di\uu=0$. Then, there exists $\sigma\in
L_{sym}^p(\o,\beta-\alpha)^{2\times 2}$
satisfying
$$
\int_{\o}\sigma:D\,\ww\,=\,\int_\o Curl\,\uu:D\ww,
\qquad \forall \ww\in W^{1,p'}(\o,\alpha-\beta)^2
$$
and
$$
\|\sigma\|_{L^p(\o,\beta-\alpha)^{2\times 2}}
\le C\|Curl\,\uu\|_{L^p(\o,\beta-1)^{2\times 2}}.
$$
\end{lemma}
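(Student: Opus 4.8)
The plan is to construct $\sigma$ by a duality argument, the analytic input being the weighted Korn inequality of Corollary~\ref{kornpeso}. Write $q=p'$ and consider the (not necessarily closed) subspace
$$
V=\{\ve(\ww)\,:\,\ww\in W^{1,q}(\o,\a-\b)^2\}\subset L_{sym}^{q}(\o,\a-\b)^{2\times 2},
$$
and define on $V$ the functional $T(\ve(\ww))=\int_\o Curl\,\uu:D\ww$. First one must check that $T$ is well defined, i.e.\ that $\int_\o Curl\,\uu:D\ww$ depends on $\ww$ only through $\ve(\ww)$. Since $\b\le 1$ we have $Curl\,\uu\in L^p(\o,\b-1)\subset L^1(\o)$ and $\di\uu\in L^1(\o)$, so all integrals below are meaningful. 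If $\vv\in\mathcal N$ then $D\vv$ is a constant skew-symmetric matrix; writing its $(1,2)$-entry as $c$, an elementary computation from the definitions of $Curl$ and $\cu$ gives $Curl\,\uu:D\vv=-c\,\di\uu$, so the hypothesis $\int_\o\di\uu=0$ yields $\int_\o Curl\,\uu:D\vv=0$. If $\ve(\ww_1)=\ve(\ww_2)$ then $\ww_1-\ww_2\in\mathcal N$ (here one uses that $\o$ is connected), hence $T(\ve(\ww_1))=T(\ve(\ww_2))$; more generally, $\int_\o Curl\,\uu:D\ww$ is unchanged when $\ww$ is replaced by $\ww-\vv$ for any $\vv\in\mathcal N$.

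Next I would show that $T$ is bounded on $V$ by $C\,\|Curl\,\uu\|_{L^p(\o,\b-1)}\,\|\ve(\ww)\|_{L^q(\o,\a-\b)}$. Given $\ww$, use Corollary~\ref{kornpeso} (applied with $q$ in place of $p$) together with the fact that $\mathcal N$ is finite dimensional to choose $\vv\in\mathcal N$ with
$$
\|\ww-\vv\|_{W^{1,q}(\o,1-\b)}\le C\,\|\ve(\ww)\|_{L^q(\o,\a-\b)} ;
$$
this is legitimate because $\a-\b\le 1-\b$ and $\o$ is bounded, so that $W^{1,q}(\o,\a-\b)\subset W^{1,q}(\o,1-\b)$ and $\mathcal N\subset W^{1,q}(\o,1-\b)$. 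Then, by the invariance observed above and H\"older's inequality with the complementary weights $\b-1$ and $1-\b$,
$$
|T(\ve(\ww))|=\Bigl|\int_\o Curl\,\uu:D(\ww-\vv)\Bigr|\le\|Curl\,\uu\|_{L^p(\o,\b-1)}\,\|D(\ww-\vv)\|_{L^q(\o,1-\b)},
$$
and the last factor is $\le C\|\ve(\ww)\|_{L^q(\o,\a-\b)}$ by the choice of $\vv$.

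Finally, by the Hahn--Banach theorem extend $T$ to a linear functional on all of $L^{q}(\o,\a-\b)^{2\times 2}$ of the same norm. Since the dual of $L^{q}(\o,\a-\b)^{2\times 2}$ is $L^{p}(\o,\b-\a)^{2\times 2}$ under the pairing $\int_\o\tau:E$, this extension is represented by a tensor $\tau\in L^p(\o,\b-\a)^{2\times 2}$ with $\|\tau\|_{L^p(\o,\b-\a)}\le C\,\|Curl\,\uu\|_{L^p(\o,\b-1)}$. Setting $\sigma=\tfrac12(\tau+\tau^{T})$ we obtain a symmetric tensor with $\|\sigma\|_{L^p(\o,\b-\a)}\le\|\tau\|_{L^p(\o,\b-\a)}$ and, for every $\ww\in W^{1,q}(\o,\a-\b)^2$,
$$
\int_\o\sigma:D\ww=\int_\o\sigma:\ve(\ww)=\int_\o\tau:\ve(\ww)=T(\ve(\ww))=\int_\o Curl\,\uu:D\ww ,
$$
which gives both the claimed identity and the claimed estimate.

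I expect the only delicate point to be the invariance step: verifying that a field in $W^{1,q}(\o,\a-\b)^2$ with vanishing symmetric gradient is an infinitesimal rigid motion --- which uses connectedness of $\o$ --- together with the pointwise identity $Curl\,\uu:D\vv=-c\,\di\uu$, which is precisely where the assumption $\int_\o\di\uu=0$ is used. Everything else is routine once Corollary~\ref{kornpeso} is available, so in effect the whole analytic difficulty of the lemma has been moved onto that weighted Korn inequality.
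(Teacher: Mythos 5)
Your proof is correct and follows essentially the same route as the paper: the functional $T(\ve(\ww))=\int_\o Curl\,\uu:D\ww$ on the subspace of symmetric gradients, well-definedness via rigid motions and $\int_\o\di\uu=0$, boundedness via Corollary \ref{kornpeso} and H\"older with the complementary weights $\b-1$ and $1-\b$, then Hahn--Banach and duality. The only (cosmetic) difference is that you extend to the full tensor space and symmetrize $\tau$ afterwards, while the paper extends within $L^{p'}_{sym}(\o,\a-\b)^{2\times 2}$ and obtains a symmetric representative directly.
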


\begin{proof} Let $H\subset L_{sym}^{p'}(\o,\alpha-\beta)^{2\times 2}$ the
subspace
defined as
$$
H=\{\tau\in L_{sym}^{p'}(\o,\alpha-\beta)^{2\times 2}
\,:\,\tau=\ve(\ww)\,\, \mbox{with}\ \ww\in W^{1,p'}(\o,\alpha-\beta)^2\}.
$$
Let us see that the application
\begin{equation}
\label{funcional}
T\, : \, \ve(\ww)\mapsto\int_\o Curl\,\uu:D\ww
\end{equation}
defines a continuous linear functional on $H$.

First of all observe that $T$ is well defined. Indeed,
it is enough to check that the expression on the right of (\ref{funcional})
vanishes whenever $\ve(\ww)=0$. But, it is known that in that case
$\ww(x,y)=(a-cy,b+cx)$
and therefore
$$
\int_\o Curl\,\uu:D\ww = c\int_\o \di\uu=0.
$$
Now, we want to show that $T$ is continuous on $H$. Using again that
$\int_\o Curl\,\uu:D\vv=0$ if $\ve(\vv)=0$ and applying
Corollary \ref{kornpeso} we have, for $\tau=\ve(\ww)\in H$,

\begin{eqnarray*}
|T(\tau)|\,&=&\,\left|\int_\o Curl\,\uu:D\ww \right|\\
&\le&\,\|Curl\,\uu\|_{L^p(\o,\beta-1)^{2\times 2}}\,\inf_{\vv\in
{\mathcal N}}\|
D(\ww-\vv)\|_{L^{p'}(\o,1-\beta)^{2\times 2}}\\
&\le& C\|Curl\,\uu\|_{L^p(\o,\beta-1)^{2\times 2}}
\|\ve(\ww)\|_{L^{p'}(\o,\alpha-\beta)^{2\times 2 }}\\
&=& \,C\|Curl\,\uu\|_{L^p(\o,\beta-1)^{2\times 2}}
\|\tau\|_{L^{p'}(\o,\alpha-\beta)^{2\times 2 }.}
\end{eqnarray*}

By the Hahn-Banach theorem the functional $T$
can be extended to $L_{sym}^{p'}(\o,\alpha-\beta)^{2\times 2}$
and therefore, by the Riesz representation theorem, there exists
$\sigma\in L_{sym}^p(\o,\beta-\alpha)^{2\times 2}$
such that

\begin{eqnarray*}
T(\tau)\,=\,\int_\o \sigma:\tau \qquad \forall \tau\in
L_{sym}^{p'}(\o,\alpha-\beta)^{2\times 2}
\end{eqnarray*}
and
$$
\|\sigma\|_{L^p(\o,\beta-\alpha)^{2\times 2}}
\le C\|Curl\,\uu\|_{L^p(\o,\beta-1)^{2\times 2}},
$$
where $C$ depends on the constant in Corollary \ref{kornpeso}.
In particular,

\begin{eqnarray}
\label{parte4}
\int_{\o}\sigma:\ve(\ww)\,=\,\int_\o Curl\,\uu:D\ww
\end{eqnarray}
for every $\ww\in W^{1,p'}(\o,\alpha-\beta)^2$.
Then, we conclude the proof observing that, since $\sigma$ is symmetric,
we can replace $\ve(\ww)$
in (\ref{parte4}) by $D\ww$. \end{proof}

It is a very well known result that a divergence free vector
field is a rotational of a scalar function $\phi$. Indeed, for smooth vector
fields the
proof is usually given at elementary courses on calculus in several variables.
On the other hand, if the vector field is
only in $L^p(\o)^2$ but $\partial\o$ is Lipstchiz, it is not
difficult to see that the vector field can be extended to a divergence
free vector field defined in $\R^2$ and then, the existence of $\phi$
can be proved by using the Fourier transform.
However, we need to use the existence of $\phi$ in the case where the domain and
the vector field
are both non-smooth. We have not
been able to find a proof of this result in the literature and so we include the
following lemma.

\begin{lemma}
\label{rotor}
Let $\o\subset\R^2$ be a simply connected H\"{o}lder
$\alpha$ domain and $\alpha \leq\beta\leq 1$. Given a vector
field $\vv\in L^p(\o,1-\beta)^2$ such that $\di\vv=0$, there
exists $\phi\in W^{1,p}(\o,1-\beta)$ such that
$$
\cu\phi=\vv
\hspace*{1cm}{\rm and}
\hspace*{1cm}
\|\phi\|_{W^{1,p}(\o,1-\beta)}
\le C\|\vv\|_{L^p(\o,1-\beta)}
$$
where $C$ is a constant depending only on $\o$.
\end{lemma}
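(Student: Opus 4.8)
The plan is to separate the construction of a scalar potential $\phi$ with $\cu\phi=\vv$ from the weighted estimate; once $\phi$ exists, the estimate will follow at once from the improved Poincar\'e inequality~(\ref{improved poincare}). To reformulate: setting $\ww=(-v_2,v_1)$, the identity $\cu\phi=\vv$ is equivalent to $\nabla\phi=\ww$, and the hypothesis $\di\vv=0$ in $\mathcal D'(\o)$ says exactly that $\ww$ is curl free in $\mathcal D'(\o)$, i.e.\ $\partial_{x_1}w_2=\partial_{x_2}w_1$. Since $|\ww|=|\vv|$ pointwise we have $\ww\in L^p(\o,1-\beta)^2$ with $\|\ww\|_{L^p(\o,1-\beta)}=\|\vv\|_{L^p(\o,1-\beta)}$; as $1-\beta\ge0$ and $\o$ is bounded, the weight $d^{1-\beta}$ is bounded below on every compact subset of $\o$, so in particular $\ww\in L^p_{\rm loc}(\o)\subset L^1_{\rm loc}(\o)$.

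The core of the argument is the existence of $\phi\in W^{1,p}_{\rm loc}(\o)$ with $\nabla\phi=\ww$, and this is the step I expect to be the main obstacle: $\o$ has no boundary regularity and $\ww$ is only locally $L^p$, so what is really needed is the exactness of distributional closed $1$-forms on a simply connected planar open set. I would obtain it through an exhaustion. Being bounded and simply connected, $\o$ admits an increasing sequence of bounded, smooth, simply connected open sets $\o_k$ with $\overline{\o_k}\subset\o_{k+1}$ and $\bigcup_k\o_k=\o$ (for instance, the images of the disks $\{|z|<1-1/k\}$ under a Riemann map $\mathbb D\to\o$, which exists since $\o$ is a bounded simply connected domain). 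On each $\o_k$, the $L^p$ Poincar\'e lemma for closed $1$-forms on a smooth simply connected bounded domain — which one can get from the $L^p$ theory of the Neumann problem together with the vanishing of the first de Rham cohomology, or by mollification and passage to the limit — yields $\phi_k\in W^{1,p}(\o_k)$ with $\nabla\phi_k=\ww$ in $\o_k$. Let $B$ and $\varphi\in C_0^\infty(B)$ with $\int_B\varphi=1$ be the data of~(\ref{improved poincare}), chosen with $\overline B\subset\o_1$, and normalize each $\phi_k$ so that $\int_B\phi_k\,\varphi=0$. Then $\nabla(\phi_{k+1}-\phi_k)=0$ on $\o_k$, so $\phi_{k+1}-\phi_k$ is constant on the connected set $\o_k$, and the normalization forces this constant to be $0$; hence the $\phi_k$ glue together into a single $\phi\in W^{1,p}_{\rm loc}(\o)$ with $\nabla\phi=\ww$ (equivalently $\cu\phi=\vv$) and $\int_B\phi\,\varphi=0$.

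It then remains only to read off the weighted bound. Since $\nabla\phi=\ww$ we already have $\|\nabla\phi\|_{L^p(\o,1-\beta)}=\|\vv\|_{L^p(\o,1-\beta)}<\infty$. Because $\a>0$ and $d$ is bounded on $\o$, we have $d^{1+\a-\beta}\le C\,d^{1-\beta}$, hence $\|\nabla\phi\|_{L^p(\o,1+\a-\beta)}\le C\,\|\nabla\phi\|_{L^p(\o,1-\beta)}$. Applying~(\ref{improved poincare}) to $f=\phi$ — legitimate since $\a\le\beta\le1$ and $\int_B\phi\,\varphi=0$ — gives
$$
\|\phi\|_{L^p(\o,1-\beta)}\ \le\ C\,\|\nabla\phi\|_{L^p(\o,1+\a-\beta)}\ \le\ C\,\|\nabla\phi\|_{L^p(\o,1-\beta)}\ =\ C\,\|\vv\|_{L^p(\o,1-\beta)}.
$$
Adding the two bounds yields $\|\phi\|_{W^{1,p}(\o,1-\beta)}\le C\|\vv\|_{L^p(\o,1-\beta)}$; in particular $\phi\in W^{1,p}(\o,1-\beta)$, which is exactly the assertion of the lemma.
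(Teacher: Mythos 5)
Your proposal is correct and takes essentially the same route as the paper: exhaust $\o$ by nice simply connected subdomains, construct a potential there (the paper does this by mollifying the zero-extension of $\vv$ and a Poincar\'e--Cauchy-sequence argument, you by invoking the standard $L^p$ exactness of curl-free fields on smooth simply connected domains via a Riemann-map exhaustion), fix the additive constants by a normalization so the pieces glue, and then deduce the weighted bound from the improved Poincar\'e inequality (\ref{improved poincare}) together with $d^{1+\a-\b}\le C\,d^{1-\b}$, which is exactly how the paper concludes using Theorem 2.1 of \cite{ADL}.
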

\begin{proof}
Take $\psi\in C_0^{\infty}(B_1)$ satisfying
$\int\psi=1$, where $B_1$ is the unit ball centered at
the origin. For $k\ge 1$, define $\psi_k(x)=k^2\psi(kx)$
and, extending $\vv$ by zero to $\R^2$, $\vv_k=\psi_k\ast\vv$.

Let $\Omega_n$ be a sequence of Lipschitz simply
connected open subsets of $\o$ such that
$$
\overline\o_n\subset\Big\{x\in\o :\,d(x)>\frac{1}{n}\Big\}
\hspace*{1cm}{\rm and} \hspace*{1cm}\o_n\nearrow\o.
$$
Using that the distance between
$\o_n$ and $\partial\o$ is greater than $1/n$ and
${\rm supp}\,\psi_k\subset B(0,\frac{1}{k})$, it is not
difficult to see that
$\di\vv_k=0$ in $\o_n$ for every $k\ge n$.

Then, since $\vv_n\in C^\infty_0(\R^2)^2$, there exists
$\phi_n\in C^\infty_0(\o_n)$ such that $\cu\phi_n=\vv_n$.
Moreover, adding a constant we can take $\phi_n$
such that $\int_{\o_1} \phi_n=0$.

Now, by the Poincar\'e inequality we have, for any $n$,
there exists a constant $C$ depending only on $n$ such that
$$
\|\phi_k-\phi_{k^\prime}\|_{L^p(\o_n)}
\le C \|\cu(\phi_k-\phi_{k^\prime})\|_{L^p(\o_n)}
= C \|\vv_k-\vv_{k^\prime}\|_{L^p(\o_n)}\rightarrow 0
$$
for $k,k^{\prime}\to\infty$.

Then, there exists $\phi\in L^1_{loc}(\o)$ such that
$\phi_k|_{\o_n}\to \phi$ in $W^{1,p}(\o_n)$ and so
$\cu\phi=\vv$ in $\o_n , \forall n$ and consequently in $\o$.

Finally, using Theorem 2.1 of \cite{ADL} we have
$$
\|\phi\|_{L^p(\o,1-\beta)}
\le C\|\cu\phi\|_{L^p(\o,1-\beta+\a)}
\le C\|\vv\|_{L^p(\o,1-\beta)}
$$
and the Lemma is proved.
\end{proof}

We can now state and prove our results on solutions of the divergence
on H\"older-$\a$ domains. As we mentioned above, it is known that
for this kind of domains a solution of (\ref{divu=f1}) satisfying
(\ref{estimacion})
does not exist in general. Therefore, it is natural to look for solutions
of (\ref{divu=f1}) satisfying a weaker estimate. There are two possibilities:
to use a stronger norm on the right of (\ref{estimacion}) or a weaker
norm on the left. We will prove both kind of results but, to avoid technical
complications
while presenting the arguments, we give first a particular case of our results
and postpone the generalization.

\begin{theorem}
\label{divergencia-part}
Let $\o\subset\R^2$ be a bounded simply connected
H\"older-$\a$ domain, $0<\a\le 1$.
Given $f\in L_0^p(\o)$, $1<p<\infty$, there
exists
$\uu\in W_{const}^{1,p}(\o,1-\alpha)^2$ such that
$$
\di\uu=f
$$
and
\begin{equation}
\label{pesoalaizq}
\|D\uu\|_{L^p(\o,1-\alpha)}
\le C\|f\|_{L^p(\o)}
\end{equation}
\end{theorem}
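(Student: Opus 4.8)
The plan is to reduce everything to the two preceding lemmas, after first producing a crude particular solution of the divergence equation that ignores the boundary behaviour. First I would pick a ball $B$ with $\overline\o\subset B$ and extend $f$ by zero to $\tilde f\in L^p(B)$; since $\int_B\tilde f=\int_\o f=0$, the classical solvability on smooth domains ((\ref{divu=f1})--(\ref{estimacion})) gives $\ww_0\in W_0^{1,p}(B)^2$ with $\di\ww_0=\tilde f$ and $\|\ww_0\|_{W^{1,p}(B)}\le C\|f\|_{L^p(\o)}$. Restricting to $\o$ yields $\ww_0\in W^{1,p}(\o)^2$ with $\di\ww_0=f$, $\|D\ww_0\|_{L^p(\o)}\le C\|f\|_{L^p(\o)}$, and, crucially, $\int_\o\di\ww_0=0$, so that Lemma \ref{tensorsimetrico} applies to $\ww_0$ with $\b=1$.

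The core of the argument is then a symmetrization-plus-reconstruction step. Applying Lemma \ref{tensorsimetrico} with $\b=1$ produces a symmetric $\sigma\in L_{sym}^p(\o,1-\a)^{2\times2}$ with $\int_\o\sigma:D\ww=\int_\o Curl\,\ww_0:D\ww$ for all $\ww\in W^{1,p'}(\o,\a-1)^2$ and $\|\sigma\|_{L^p(\o,1-\a)}\le C\|Curl\,\ww_0\|_{L^p(\o)}\le C\|f\|_{L^p(\o)}$. I would set $\tau:=Curl\,\ww_0-\sigma$; since $1-\a\ge0$ one has $L^p(\o)\hookrightarrow L^p(\o,1-\a)$, so $\tau\in L^p(\o,1-\a)^{2\times2}$, and testing the identity above against $\ww=\phi\,e_i$ shows that each row $\tau_i$ satisfies $\int_\o\tau_i\cdot\nabla\phi=0$ for all $\phi\in W^{1,p'}(\o,\a-1)$; in particular, taking $\phi\in C_0^\infty(\o)$ (which lies in $W^{1,p'}(\o,\a-1)$, being compactly supported) gives $\di\tau_i=0$ in $\o$. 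Now Lemma \ref{rotor}, applied with $\b=\a$ to each $\tau_i$ — this is where simple connectedness of $\o$ is used — yields $\phi_i\in W^{1,p}(\o,1-\a)$ with $\cu\phi_i=\tau_i$ and $\|\phi_i\|_{W^{1,p}(\o,1-\a)}\le C\|\tau_i\|_{L^p(\o,1-\a)}\le C\|f\|_{L^p(\o)}$; moreover $\int_\o\cu\phi_i\cdot\nabla\phi=\int_\o\tau_i\cdot\nabla\phi=0$ for all $\phi\in W^{1,p'}(\o,-(1-\a))$, which is precisely the defining property of $W_{const}^{1,p}(\o,1-\a)$. I would then take $\uu:=(\phi_1,\phi_2)$, so that $\uu\in W_{const}^{1,p}(\o,1-\a)^2$ and $\|D\uu\|_{L^p(\o,1-\a)}\le C\|f\|_{L^p(\o)}$, i.e. (\ref{pesoalaizq}).

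It remains to check $\di\uu=f$, and this is where the symmetry of $\sigma$ finally pays off: by construction the $i$-th row of $\sigma$ equals $\cu(\ww_0)_i-\tau_i=\cu\bigl((\ww_0)_i-\phi_i\bigr)$, so $\sigma=Curl\,\gg$ with $\gg:=\ww_0-\uu$; a one-line computation shows that a matrix of the form $Curl\,\gg$ is symmetric if and only if $\di\gg=0$, hence $\di\uu=\di\ww_0=f$. The main obstacle is not any single hard estimate — those are handed to us by the two lemmas and by the bound on $\ww_0$ — but rather the bookkeeping of the weighted spaces: one must check that the exponents line up so that the output weight of Lemma \ref{tensorsimetrico} ($\b-\a$ with $\b=1$) coincides with the admissible input weight of Lemma \ref{rotor} ($1-\b$ with $\b=\a$), that the test-function spaces in the two lemmas are dual in the right sense, and that the weak boundary condition deduced for the rows of $\tau$ transfers to $\uu$ through the curl. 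Once these identifications are in place, (\ref{pesoalaizq}) is immediate.
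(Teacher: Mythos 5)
Your proposal is correct and follows essentially the same route as the paper: crude solution $\ww_0$ by extension to a ball, Lemma \ref{tensorsimetrico} with $\beta=1$ to produce the symmetric tensor $\sigma$, Lemma \ref{rotor} with $\beta=\alpha$ to reconstruct a vector field from a divergence-free tensor, and the symmetry of $\sigma$ to recover $\di\uu=f$. The only (harmless) difference is organizational: the paper applies Lemma \ref{rotor} to $\sigma$ itself, obtaining $\ww$ with $Curl\,\ww=\sigma$ and setting $\uu=\vv-\ww$, while you apply it to $\tau=Curl\,\ww_0-\sigma$ and obtain $\uu$ directly, which yields the same solution up to an additive constant vector.
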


\begin{proof} Take $\vv\in W^{1,p}(\o)^2$ such that
\begin{equation}
\label{casopart1}
\di\vv=f
\end{equation}
and
\begin{equation}
\label{casopart2}
\|\vv\|_{W^{1,p}(\o)} \le C\|f\|_{L^p(\o)}.
\end{equation}
The existence of such a $\vv$ is well known, for example,
since no boundary condition on $\vv$ is required,
we can extend $f$ by zero and take the solution of problem
(\ref{divu=f1}) and (\ref{estimacion})
in a ball containing $\o$.

To prove the theorem it is enough to
show that there exists $\ww\in W^{1,p}(\o,1-\alpha)^2$
satisfying $\di\ww=0$ and such that
$$
\vv-\ww \in W_{const}^{1,p}(\o,1-\alpha)^2
$$
and
\begin{equation}
\label{casopart3}
\|D\ww\|_{L^p(\o,1-\alpha)}
\le C\|f\|_{L^p(\o)}.
\end{equation}
Indeed, in view of (\ref{casopart1}),
$\uu:=\vv-\ww$ will be the desired solution.

But, since $\di\vv$ has vanishing mean value, we know from
Lemma \ref{tensorsimetrico} that there exists
$\sigma\in L_{sym}^p(\o,1-\alpha)^{2\times 2}$ satisfying
\begin{equation}
\label{casopart4}
\|\sigma\|_{L^p(\o,1-\alpha)}
\le C\,\|Curl\,\vv \|_{L^p(\o)}
\end{equation}
and
$$
\int_{\o}\sigma:D\rr = \int_\o Curl\,\vv:D\rr
\quad , \quad\forall\rr\in W^{1,p'}(\o,\alpha-1)^2.
$$
Then,
$$
\int_\o Div\,\sigma\cdot\rr =
-\int_\o \sigma:D\rr
=-\int_\o Curl\vv : D\rr
=\int_\o Div\,Curl\,\vv\cdot\rr=0
$$
for every $\rr\in C_0^\infty(\o)^2$ and therefore $Div\,\sigma=0$.

Now, from Lemma \ref{rotor} we know that there exists
$\ww\in W^{1,p}(\o,1-\alpha)^2$ such that
\begin{eqnarray}
\label{casopart6}
Curl\,\ww=\sigma\hspace*{1cm}{\rm
and}\hspace*{1cm}\|\ww\|_{W^{1,p}(\o,1-\alpha)}
\le C\|\sigma\|_{L^p(\o,1-\alpha)}.
\end{eqnarray}

We have to check that $\di\ww=0$, but
since $\sigma$ is a symmetric tensor we have
\begin{eqnarray*}
\label{casopart5}
\di\ww=\,\frac{\partial w_1}{\partial x_1}
+\frac{\partial w_2}{\partial x_2}
=-\sigma_{12}+\sigma_{21}=0.
\end{eqnarray*}

To conclude the proof observe that in view of (\ref{casopart2}),
(\ref{casopart4}) and (\ref{casopart6}) we have
(\ref{casopart3}) which together with (\ref{casopart2})
yields (\ref{pesoalaizq}). \end{proof}

Now, it is natural to ask whether part or all the weight in the estimate
(\ref{pesoalaizq}) can be moved to the right hand side. We will give a positive
answer
to this question. As we will show, the proof of this more general result is
similar to that of Theorem \ref{divergencia} but it requires some non-trivial
preliminary results. In particular, we will need an extra hypothesis on the
domain.

\begin{defi}
For $0<m\le n$, a compact $F\subset\R^n$
is an ${\mbox{\bf m-set}}$, if there exists a positive constant $C$ such that
$$
C^{-1}r^m < {\mathcal H}^m(B(x,r)\cap F)< C r^m,
$$
for every $x\in F$ and $0<r\le diam F$,
where ${\mathcal H}^m$ is the $m$-dimensional Hausdorff measure
and $B(x,r)$ is the ball with radius $r$ and center $x$.
\end{defi}

The reader who is not familiar with Hausdorff measure can think in
the particular case that $\partial\o$ is a rectifiable curve in $\R^2$ and
$m=1$.
In that case ${\mathcal H}^1$ is the length.

We are going to use that Calder\'on-Zygmund singular integral
operators are continuous in weighted $L^p$-norms, $1<p<\infty$,
for weights in the Muckenhaupt class $A_p$. This is a well known result
which can be seen for example in the book \cite{S2}.

We state and prove the following lemma in the general n-dimensional case
since it does not make any difference with the particular case $n=2$.
Our lemma generalizes the results proved in \cite{DST} for smooth domains.
Since the proof is too technical we postpone
it for an appendix and continue now with our main results.
In what follows we consider the distance to the boundary of $\o\subset\R^n$,
$d(x)$ defined for every $x\in\R^n$ and not only for $x\in\o$.

\begin{lemma}
\label{Ap}
Let $\o\subset\R^n$ be a bounded domain such that
its boundary is an ${\mbox{\bf m-set}}$. If
$-(n-m)<\mu<(n-m)(p-1)$, then $d^\mu$ belongs to the class
$A_p$.
\end{lemma}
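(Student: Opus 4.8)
The plan is to verify directly the $A_p$ condition for the weight $w(x)=d(x)^\mu$, namely that
$$
\sup_{Q}\left(\pint_Q d^\mu\right)\left(\pint_Q d^{-\mu/(p-1)}\right)^{p-1}<\infty,
$$
where the supremum is over all cubes $Q\subset\R^n$. The key observation is that the behaviour of the integral $\pint_Q d^\mu$ depends only on the relation between the side length $\ell(Q)$ of $Q$ and the distance from $Q$ to $\partial\o$. So I would split the analysis into two regimes.

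First, the easy regime: if $\mathrm{dist}(Q,\partial\o)\ge \ell(Q)$ (the cube is "far" from the boundary), then on $Q$ the function $d$ is comparable to a constant, $d(x)\approx \mathrm{dist}(Q,\partial\o)$, so both averages are comparable to powers of that constant and their product is bounded by an absolute constant, for any real $\mu$. Here I would only use the elementary fact that $|d(x)-d(y)|\le|x-y|$.

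Second, the hard regime: $\mathrm{dist}(Q,\partial\o)<\ell(Q)$, so a slightly enlarged cube $Q^*$ (say with side $3\,\ell(Q)$, still comparable to $Q$) meets $\partial\o$. Here the $\mathbf{m}$-set hypothesis enters. The main estimate to establish is that for a cube $Q$ of side $\ell$ centered near $\partial\o$ and for any exponent $\nu$ in the admissible range,
$$
\pint_{Q} d^{\nu}\,dx \;\le\; C\,\ell^{\nu},
$$
with $C$ independent of $Q$. For $\nu\ge 0$ this is trivial since $d\le C\ell$ on $Q$; the content is the case $\nu<0$. To get it one decomposes $Q$ into dyadic annuli $A_j=\{x\in Q:\, 2^{-j-1}\ell< d(x)\le 2^{-j}\ell\}$, $j\ge 0$, and estimates $|A_j|$. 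This is exactly where the upper bound ${\mathcal H}^m(B(x,r)\cap\partial\o)\le C r^m$ in the definition of $\mathbf{m}$-set is used: covering the part of $\partial\o$ inside $Q^*$ by $\sim\ell/(2^{-j}\ell)=2^{j}$ cubes of side $2^{-j}\ell$ is consistent with the $m$-dimensional measure bound, and the set $A_j$ is contained in the $2^{-j}\ell$-neighbourhood of $\partial\o\cap Q^*$, hence $|A_j|\le C\,(2^{-j}\ell)^{n}\cdot 2^{j(m)} \cdot 2^{j(n-m)}/2^{j(n-m)}$ — more cleanly, $|A_j|\le C\,2^{-j(n-m)}\ell^n$. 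Summing,
$$
\pint_Q d^\nu \;\le\; C\,\ell^{-n}\sum_{j\ge 0} (2^{-j}\ell)^{\nu}\,2^{-j(n-m)}\ell^n
\;=\;C\,\ell^{\nu}\sum_{j\ge 0} 2^{-j(\nu+n-m)},
$$
and the geometric series converges precisely when $\nu>-(n-m)$. Applying this with $\nu=\mu$ (which needs $\mu>-(n-m)$) and with $\nu=-\mu/(p-1)$ (which needs $-\mu/(p-1)>-(n-m)$, i.e. $\mu<(n-m)(p-1)$) gives $\pint_Q d^\mu\le C\ell^\mu$ and $\pint_Q d^{-\mu/(p-1)}\le C\ell^{-\mu/(p-1)}$, whose product raised to the appropriate powers is $\le C$, independent of $Q$.

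Combining the two regimes yields a uniform bound on the $A_p$ quantity, which is the assertion. The main obstacle is the geometric/measure-theoretic estimate $|A_j|\le C\,2^{-j(n-m)}\ell^n$ for the dyadic distance annuli; this is where one must genuinely exploit that $\partial\o$ is an $\mathbf{m}$-set rather than an arbitrary closed set, and care is needed with the covering argument and with the fact that the $\mathbf{m}$-set condition is stated for balls centered on $F$ and radii $r\le\mathrm{diam}\,F$, so one restricts to small cubes and handles large cubes (comparable to or larger than $\mathrm{diam}\,\o$) separately by a trivial bound since $\o$ is bounded. Given that the authors defer this to an appendix, I would likewise isolate this covering lemma as the technical core and present the reduction to it as above.
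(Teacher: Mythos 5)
Your reduction is essentially the paper's: split into cubes far from $\partial\o$ (where $d$ is comparable to a constant by $|d(x)-d(y)|\le |x-y|$) and cubes near $\partial\o$, decompose the latter by the dyadic size of $d$, and sum geometric series that converge exactly for $-(n-m)<\mu<(n-m)(p-1)$. The paper organizes the near-boundary case by Whitney cubes rather than distance annuli, but the count $N_k\left(B(x_0,R)\right)\le C\,R^m\,2^{km}$ of Lemma \ref{m-set} is precisely your bound $|A_j|\le C\,2^{-j(n-m)}\ell^n$ (each generation-$k$ cube has volume $2^{-kn}$ and $d\simeq 2^{-k}$ on it), so the difference is cosmetic.

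The genuine gap is in the one estimate you defer, and in how you propose to prove it: you assert that $|A_j|\le C\,2^{-j(n-m)}\ell^n$ "is exactly where the upper bound ${\mathcal H}^m(B(x,r)\cap\partial\o)\le Cr^m$ is used", but the upper bound alone cannot yield it. The annulus bound is a Minkowski-content (box-counting) statement: you need the covering number of $\partial\o\cap Q^*$ by balls of radius $r=2^{-j}\ell$ to be at most $C(\ell/r)^m$, and an upper Hausdorff bound gives no control of covering numbers (a compact countable set has ${\mathcal H}^m=0$, hence satisfies the upper bound vacuously, yet its $r$-neighbourhoods can be far larger than $r^{n-m}\ell^m$). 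The lower bound in the m-set definition is indispensable: take a maximal $r$-separated family $x_1,\dots,x_N$ in $\partial\o\cap Q^*$; the balls $B(x_i,r/2)$ are disjoint, each satisfies ${\mathcal H}^m\left(B(x_i,r/2)\cap\partial\o\right)\ge C^{-1}(r/2)^m$ by the lower bound, and all lie in a ball of radius $\simeq\ell$ centered on $\partial\o$ whose trace on $\partial\o$ has ${\mathcal H}^m\le C\ell^m$ by the upper bound; hence $N\le C(\ell/r)^m$, the balls $B(x_i,r)$ cover, and $|A_j|\le CNr^n\le Cr^{n-m}\ell^m$. This packing-plus-covering step is exactly the content of the paper's Lemma \ref{m-set} (the quantities $P(F_0,r/2)$ and $H_m(F_0,r)$), so it is the part you must actually write out. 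Two symptomatic slips in your sketch: the covering count should be of order $2^{jm}$, not $2^{j}$ (your figure is correct only for $m=1$, while the lemma is stated for general $m$), and the displayed chain $C(2^{-j}\ell)^n\cdot 2^{jm}\cdot 2^{j(n-m)}/2^{j(n-m)}$ is not an argument; also, since the m-set condition holds only for $r\le \mathrm{diam}(\partial\o)$, the scales $2^{-j}\ell$ exceeding $\mathrm{diam}(\partial\o)$ must be disposed of by the trivial inclusion of $\{d\le 2^{-j}\ell\}$ in a ball of comparable radius, as you indicate but do not carry out.
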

\begin{proof} See Apendix. \end{proof}

As a consequence we have the following result on weighted estimates for
solutions of the divergence.

\begin{lemma}
\label{divAp1}
Let $\o\subset\R^2$ be a bounded domain such that its boundary
is a ${\mbox{\bf 1-set}}$. Given $f\in L^p(\o,\g)$, $1<p<\infty$,
with $-1/p<\g\le 1- 1/p$
there exists $\vv\in W^{1,p}(\o,\g)^2$ such that
$$
\di\vv=f
$$
and
$$
\|\vv\|_{W^{1,p}(\o,\g)^2}
\le C\|f\|_{L^p(\o,\g)}
$$
\end{lemma}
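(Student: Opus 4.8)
The plan is to transplant the problem to a smooth model domain and then combine Lemma \ref{Ap} with the weighted boundedness of Calder\'on--Zygmund operators recalled above. Since no boundary condition is imposed on $\vv$, I would fix a ball $B$ with $\overline\o\subset B$, extend $f$ by zero to $\tilde f\in L^p(B)$, and construct $\vv$ on $B$. The quickest route is the Newtonian potential: setting $\vv=\nabla(N*\tilde f)$ with $N$ the fundamental solution of the Laplacian in $\R^2$, one gets $\di\vv=\Delta(N*\tilde f)=\tilde f$ on $B$, hence $\di\vv=f$ on $\o$, with no mean-value assumption on $f$; alternatively one may use the Bogovskii operator on $B$ (after subtracting from $\tilde f$ a smooth bump $m\eta$ supported in a fixed ball $B_0$ with $\overline{B_0}\subset\o$, so that it has vanishing integral on $B$), which has the advantage of producing $\vv\in W_0^{1,p}(B)$. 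In either case the point is that $D\vv$ is obtained from $\tilde f$ by a Calder\'on--Zygmund singular integral operator (the second derivatives of $N$, resp.\ the principal part of the Bogovskii kernel), up to a bounded multiple of $\tilde f$ itself and, in the Bogovskii case, a term with integrable kernel.

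The weighted bound for $D\vv$ is then immediate. Since $\partial\o$ is a $1$-set in $\R^2$, Lemma \ref{Ap} with $n=2$, $m=1$ gives $d^\mu\in A_p$ for every $-1<\mu<p-1$; taking $\mu=\g p$, the hypothesis $-1/p<\g<1-1/p$ yields $d^{\g p}\in A_p$, so Calder\'on--Zygmund operators are bounded on $L^p(\R^2,d^{\g p})$ and hence
$$
\|D\vv\|_{L^p(\o,\g)}\;\le\;\|D\vv\|_{L^p(\R^2,d^{\g p})}\;\le\;C\,\|\tilde f\|_{L^p(\R^2,d^{\g p})}\;=\;C\,\|f\|_{L^p(\o,\g)},
$$
using that $\tilde f$ vanishes off $\o$. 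For $\|\vv\|_{L^p(\o,\g)}$ I would use the Bogovskii realization $\vv\in W_0^{1,p}(B)$ together with the weighted Poincar\'e inequality on the ball $B$ (valid because $d^{\g p}\in A_p$), which gives $\|\vv\|_{L^p(B,d^{\g p})}\le C\|D\vv\|_{L^p(B,d^{\g p})}$; the auxiliary bump contributes only $|m|\,\|\eta\|_{L^p(\o,\g)}\le C\|f\|_{L^p(\o,\g)}\,\|d^{-\g}\|_{L^{p'}(\o)}$, which is finite precisely because $\g<1-1/p$. Restricting to $\o$ then settles the inequality for $-1/p<\g<1-1/p$.

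The case $\g=1-1/p$ is the delicate one, and I expect it to be the main obstacle: the weight $d^{\,p-1}$ sits exactly at the endpoint of the $A_p$ range of Lemma \ref{Ap} --- its formal dual weight $d^{-1}$ is not locally integrable near the $1$-set $\partial\o$ --- so neither the weighted singular integral estimate nor the mean-value correction above survives verbatim, and a fixed singular-integral operator cannot be expected to suffice. Here I would keep the potential-theoretic representation $\vv=\nabla(N*\tilde f)$, which remains well defined and still solves $\di\vv=f$ for every $f\in L^p(\o,1-1/p)$, and prove the endpoint bound for the corresponding operator by a direct geometric argument: decomposing $\o$ into the dyadic layers $\{2^{-k-1}<d<2^{-k}\}$ about $\partial\o$, controlling the interaction between layers by means of the $1$-set condition $\mathcal H^1(B(x,r)\cap\partial\o)\approx r$, and summing the resulting series --- or, alternatively, by a limiting argument applying the estimates already established for $\g<1-1/p$ to suitable interior truncations of $f$. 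The first two paragraphs amount to a routine assembly of Lemma \ref{Ap}, weighted singular integral theory and the Bogovskii construction; the endpoint analysis is where the real work lies.
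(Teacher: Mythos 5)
For the range $-1/p<\g<1-1/p$ your argument is essentially the one in the paper: extend $f$ by zero, set $\vv=\nabla(N*\tilde f)$ so that $\di\vv=f$, and combine Lemma \ref{Ap} (with $n=2$, $m=1$, $\mu=\g p$) with the weighted Calder\'on--Zygmund estimate to bound $D\vv$. Your extra care with the zeroth-order term (the Bogovskii realization plus a weighted Poincar\'e inequality, or the bump correction with $\|d^{-\g}\|_{L^{p'}(\o)}<\infty$) is correct and is in fact more detail than the paper provides, which simply declares $\vv=\nabla\phi$ to be the desired solution; for the Newtonian potential one can also bound $\vv$ itself directly, since on a bounded domain the weakly singular kernel $|x-y|^{-1}$ is dominated by the maximal operator, which is bounded on $L^p(d^{\g p})$.

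Where you part company with the paper is the endpoint $\g=1-1/p$, and here your proposal is not a proof: the dyadic-layer argument is only named, not carried out, and the suggested limiting argument cannot work as stated, because the $A_p$ constant of $d^{\g p}$, hence the constant in the interior estimates, blows up as $\g\uparrow 1-1/p$. So, measured against the lemma as written, your attempt is incomplete at the endpoint. You are, however, right about the obstruction: since $\partial\o$ is a $1$-set, $d^{-1}$ fails to be locally integrable near the boundary, so $d^{p-1}\notin A_p$ and the weighted singular-integral route is genuinely unavailable there. Note that the paper's own proof has exactly the same limitation, only silently: it applies Lemma \ref{Ap} to $\mu=\g p$ without observing that $\mu=p-1$ violates the strict inequality $\mu<(n-m)(p-1)$, so it too only establishes the open range $-1/p<\g<1-1/p$; in the only place Lemma \ref{divAp1} is used (Theorem \ref{divergencia}, where $\g=\b-1\le 0$) the endpoint is never needed. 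In short: same method as the paper where the method works, and an honestly flagged but unfilled gap precisely where the paper's proof is gapped as well.
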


\begin{proof} Extend $f$ by zero to $\R^2$. Then,
it is well known that
$$
\phi(x)=-\frac1{2\pi}\int_{\R^n}\log\,|x-y|\,f(y)\,dy
$$
is a solution of $\Delta\phi=f$. Moreover, it follows from
the theory of singular integral operators (see for example \cite{S2})
that, if $w\in A_p$,
$$
\int_{\R^2}\left|\frac{\partial^2\phi(x)}{\partial x_i\partial x_j}\right|^p\,
w(x) \, dx
\le\int_{\R^2} |f(x)|^p \,w(x) \, dx.
$$
But, since $\mu=\g p$ satisfies the hypothesis
of Lemma \ref{Ap} with $n=2$ and $m=1$, $d^{\mu}\in A_p$
and therefore $\vv:=\nabla\phi$
is the desired solution. \end{proof}

We can now give our more general result on solutions
of the divergence.

\begin{theorem}
\label{divergencia}
Let $\o\subset\R^2$ be a bounded domain such that its boundary
is a ${\mbox{\bf 1-set}}$. Given $f\in L_0^p(\o,\b-1)$, $1<p<\infty$,
if $\a\le\b\le 1$ and $-1/p<\b-1$, there exists
$\uu\in W_{const}^{1,p}(\o,\beta-\alpha)^2$ such that
$$
\di\uu=f
$$
and
\begin{equation}
\label{est1}
\|D\uu\|_{L^p(\o,\beta-\alpha)}
\le C\|f\|_{L^p(\o,\beta-1)}
\end{equation}
\end{theorem}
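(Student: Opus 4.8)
The plan is to follow the structure of the proof of Theorem \ref{divergencia-part} almost verbatim, replacing the elementary unweighted right inverse of the divergence by the weighted one of Lemma \ref{divAp1} and redistributing the weight exponents; no idea beyond careful bookkeeping of the weighted spaces is needed. First I would produce a preliminary (not necessarily boundary-vanishing) solution. Since $f\in L_0^p(\o,\b-1)$ with $-1/p<\b-1$, and since $\b\le 1$ forces $\b-1\le 0\le 1-1/p$, the exponent $\b-1$ lies in the admissible range of Lemma \ref{divAp1}; this is the only place where the hypothesis that $\partial\o$ be a $1$-set is used (through Lemma \ref{Ap} and the $A_p$-boundedness of Calder\'on--Zygmund operators), and it is the source of the restriction $-1/p<\b-1$. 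Lemma \ref{divAp1} then gives $\vv\in W^{1,p}(\o,\b-1)^2$ with $\di\vv=f$ and $\|\vv\|_{W^{1,p}(\o,\b-1)}\le C\|f\|_{L^p(\o,\b-1)}$; in particular $\int_\o\di\vv=\int_\o f=0$, which makes sense because $L^p(\o,\b-1)\subset L^1(\o)$ in this range of $\b$.

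Next, exactly as in Theorem \ref{divergencia-part}, I would apply Lemma \ref{tensorsimetrico} to $\vv$ (legitimate since $\a\le\b\le 1$ and $\int_\o\di\vv=0$) to obtain a symmetric $\sigma\in L_{sym}^p(\o,\b-\a)^{2\times 2}$ with $\int_\o\sigma:D\rr=\int_\o Curl\,\vv:D\rr$ for all $\rr\in W^{1,p'}(\o,\a-\b)^2$ and $\|\sigma\|_{L^p(\o,\b-\a)}\le C\|Curl\,\vv\|_{L^p(\o,\b-1)}\le C\|f\|_{L^p(\o,\b-1)}$. Testing this identity against $\rr\in C_0^\infty(\o)^2$ and integrating by parts gives $\D\sigma=0$, so each row of $\sigma$ is a divergence-free field in $L^p(\o,\b-\a)^2$. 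Writing $\b-\a=1-(1-\b+\a)$ and noting that $\a\le\b\le 1$ forces $\a\le 1-\b+\a\le 1$, Lemma \ref{rotor} (applied to the rows of $\sigma$, with its parameter $\beta$ taken to be $1-\b+\a$, and using that $\o$ is a simply connected H\"older-$\a$ domain) yields $\ww\in W^{1,p}(\o,\b-\a)^2$ with $Curl\,\ww=\sigma$ and $\|\ww\|_{W^{1,p}(\o,\b-\a)}\le C\|\sigma\|_{L^p(\o,\b-\a)}$; symmetry of $\sigma$ gives $\di\ww=-\sigma_{12}+\sigma_{21}=0$, as before.

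Finally I would set $\uu:=\vv-\ww$, so that $\di\uu=f$. On the bounded domain $\o$ one has $d^{\b-\a}=d^{\b-1}\,d^{1-\a}\le(\operatorname{diam}\o)^{1-\a}\,d^{\b-1}$ since $1-\a\ge 0$; this shows both $\vv\in W^{1,p}(\o,\b-1)^2\subset W^{1,p}(\o,\b-\a)^2$ (hence $\uu\in W^{1,p}(\o,\b-\a)^2$) and $\|D\vv\|_{L^p(\o,\b-\a)}\le C\|f\|_{L^p(\o,\b-1)}$, which together with $\|D\ww\|_{L^p(\o,\b-\a)}\le C\|\sigma\|_{L^p(\o,\b-\a)}\le C\|f\|_{L^p(\o,\b-1)}$ yields the estimate (\ref{est1}). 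For the weak boundary condition, for every $\rr\in W^{1,p'}(\o,\a-\b)^2$ we have $\int_\o Curl\,\uu:D\rr=\int_\o Curl\,\vv:D\rr-\int_\o\sigma:D\rr=0$; taking $\rr=(\phi,0)$ and $\rr=(0,\phi)$ shows each component of $\uu$ lies in $W_{const}^{1,p}(\o,\b-\a)$, so $\uu\in W_{const}^{1,p}(\o,\b-\a)^2$.

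I do not expect a genuine obstacle: the argument is precisely that of Theorem \ref{divergencia-part} with the weights redistributed, and all the substantial analysis (the weighted Korn inequalities of Section \ref{korn} and Lemmas \ref{tensorsimetrico}, \ref{rotor}, \ref{Ap}, \ref{divAp1}) is already available. The only delicate point is to check, simultaneously, that $\b-1$ lies in the interval $(-1/p,\,1-1/p]$ required by Lemma \ref{divAp1}, that the shifted exponent $1-\b+\a$ lies in $[\a,1]$ so that Lemma \ref{rotor} applies, and that the test-function weight $\a-\b$ is common to Lemma \ref{tensorsimetrico} and to the definition of $W_{const}^{1,p}(\o,\b-\a)$ (whose dual weight is exactly $-(\b-\a)=\a-\b$).
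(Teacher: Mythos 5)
Your proposal is correct and follows essentially the same route as the paper: the weighted preliminary solution from Lemma \ref{divAp1} with exponent $\b-1$, then the argument of Theorem \ref{divergencia-part} via Lemma \ref{tensorsimetrico} and Lemma \ref{rotor} with the shifted parameter $1-\b+\a$, which is exactly what the paper leaves to the reader. Your bookkeeping (the inclusion $W^{1,p}(\o,\b-1)\subset W^{1,p}(\o,\b-\a)$ on a bounded domain, and the check that the implicit H\"older-$\a$, simply connected hypothesis is what Lemmas \ref{tensorsimetrico} and \ref{rotor} require) supplies the details the paper's proof omits.
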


\begin{proof} Since $-1/p<\b-1$, it follows from
Lemma \ref{divAp1} that there exists
$\vv\in W^{1,p}(\o,\beta-1)^2$ such that
\begin{equation}
\label{parte6}
\di\vv=f
\end{equation}
and
\begin{equation}
\label{parte6,5}
\|\vv\|_{W^{1,p}(\o,\beta-1)}
\le C\|\vv\|_{W^{1,p}(\o,\beta-\alpha)}
\le C\|f\|_{L^p(\o,\beta-1)}.
\end{equation}

The rest of the proof follows as that of Theorem \ref{divergencia-part}.
Now we have to show that there exists $\ww\in W^{1,p}(\o,\beta-\alpha)^2$
satisfying $\di\ww=0$ and such that
$$
\vv-\ww \in W_{const}^{1,p}(\o,\beta-\alpha)^2
$$
and
$$
\|D\ww\|_{L^p(\o,\beta-\alpha)}
\le C\|f\|_{L^p(\o,\beta-1)}.
$$
The reader can easily check that the existence of $\ww$ follows by using
Lemma \ref{tensorsimetrico} as
in Theorem \ref{divergencia}. \end{proof}

\section{Domains with external cusps}
\setcounter{equation}{0}
\label{external cusps}

In this section we consider the particular case
of the H\"older-$\alpha$ domain defined as
\begin{eqnarray}
\label{ej}
\o=\Big{\{}(x,y)\in\R^2\,:\,0<x<1\,
,\,0<|y|<x^{1/\alpha}\Big{\}}
\end{eqnarray}
with $0<\alpha\le 1$.

We are going to show that in this case
the weaker boundary condition imposed in
Theorem \ref{divergencia} is equivalent to the standard one, i.e., that
the solution of the divergence obtained in that theorem can be modified, by
adding a constant vector field, to obtain a solution which vanishes on the
boundary
in the classic sense.

We will consider the particular case $\b=\a$ of our general
Theorem \ref{divergencia}. Extension of the arguments to other cases
might be possible but it is not straightforward.

\begin{theorem}
\label{divergencia2}
Let $\o\subset\R^2$ be the domain
defined in (\ref{ej}) and $1<p<\infty$. If $1-1/p<\a\le 1$ then,
given $f\in L_0^p(\o,\a-1)$ there exists
$\uu\in W^{1,p}_0(\o)^2$ such that
\begin{equation}
\label{cusp1}
\di\uu=f
\end{equation}
and
\begin{equation}
\label{cusp2}
\|\uu\|_{W^{1,p}_0(\o)}
\le C\|f\|_{L^p(\o,\a-1)}
\end{equation}
with a constant depending only on $p$ and $\a$.
\end{theorem}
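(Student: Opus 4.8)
The plan is to deduce the statement from Theorem \ref{divergencia} with the special choice $\beta=\alpha$, and then to upgrade the weak boundary condition $\uu\in W_{const}^{1,p}(\o)$ to the classical one $\uu\in W^{1,p}_0(\o)^2$ by subtracting a constant vector field; the only delicate point is the cusp vertex at the origin, which is the unique point where $\partial\o$ fails to be locally Lipschitz. First I would check the hypotheses of Theorem \ref{divergencia}: $\partial\o$ is a rectifiable Jordan curve which is Lipschitz away from the origin, and since $x^{1/\alpha}\le x$ for $0<x\le1$ the arcs $y=\pm x^{1/\alpha}$ are nearly horizontal near the vertex, so $\mathcal H^1(B(0,r)\cap\partial\o)\sim r$ for all small $r$ and hence $\partial\o$ is a $1$-set. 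Since $1-1/p<\alpha$ we have $-1/p<\alpha-1=\beta-1$, and $\alpha\le\beta=\alpha\le 1$, so Theorem \ref{divergencia} produces $\uu\in W_{const}^{1,p}(\o,0)^2=W_{const}^{1,p}(\o)^2$ with $\di\uu=f$ and $\|D\uu\|_{L^p(\o)}\le C\|f\|_{L^p(\o,\alpha-1)}$.

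Next I would identify the trace of $\uu$ on the boundary. Fix a component $u=u_i$. Because $\di\,\cu u=0$, for every $\phi\in W^{1,p'}(\o)$ that vanishes near the origin an integration by parts rewrites $\int_\o\cu u\cdot\nabla\phi$ as $\int_{\partial\o}\frac{\partial u}{\partial t}\phi$, and the defining property of $W_{const}^{1,p}(\o)$ forces this to vanish; hence the tangential derivative of $u$ is zero on $\partial\o\setminus\{0\}$. Since $\partial\o\setminus\{0\}$ is connected, $u$ has a constant trace there, so there is a constant vector ${\bf c}$ such that $\uu-{\bf c}$ has vanishing trace on $\partial\o\setminus\{0\}$.

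Finally I would show $\uu-{\bf c}\in W^{1,p}_0(\o)^2$ and conclude. Take cut-offs $\eta_k=\eta_k(x)$ with $\eta_k\equiv1$ on $\{x>2/k\}$, $\eta_k\equiv0$ on $\{x<1/k\}$ and $|\nabla\eta_k|\le Ck$. Each $\eta_k(\uu-{\bf c})$ is supported in a Lipschitz subdomain of $\o$ and has vanishing trace there, hence lies in $W^{1,p}_0(\o)^2$; and $\eta_k(\uu-{\bf c})\to\uu-{\bf c}$ in $W^{1,p}(\o)^2$ once one knows $\|(\uu-{\bf c})\nabla\eta_k\|_{L^p(\o)}\to0$. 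On the segment $\{x\}\times(-x^{1/\alpha},x^{1/\alpha})$ the function $\uu-{\bf c}$ vanishes at both endpoints, so the one–dimensional Poincar\'e inequality gives $\int_{-x^{1/\alpha}}^{x^{1/\alpha}}|\uu-{\bf c}|^p\,dy\le Cx^{p/\alpha}\int_{-x^{1/\alpha}}^{x^{1/\alpha}}|\partial_y\uu|^p\,dy$; integrating over $1/k<x<2/k$ and using $|\nabla\eta_k|\le Ck$ yields
\[
\|(\uu-{\bf c})\nabla\eta_k\|_{L^p(\o)}^p\le C\,k^{p(1-1/\alpha)}\,\|\partial_y\uu\|_{L^p(\o\cap\{1/k<x<2/k\})}^p,
\]
which tends to $0$ since $1-1/\alpha\le0$ and the last factor goes to $0$ by dominated convergence. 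Thus $\uu-{\bf c}\in W^{1,p}_0(\o)^2$ still solves $\di(\uu-{\bf c})=f$, and the Poincar\'e inequality on the bounded set $\o$ gives $\|\uu-{\bf c}\|_{W^{1,p}_0(\o)}\le C\|D\uu\|_{L^p(\o)}\le C\|f\|_{L^p(\o,\alpha-1)}$; renaming $\uu-{\bf c}$ as $\uu$ ends the proof. I expect the passage from the weak to the strong boundary condition across the non-Lipschitz vertex to be the main obstacle: it is precisely here that the thinness of the cusp, $|\o\cap B(0,r)|\sim r^{1+1/\alpha}$ (equivalently $\alpha<1$), is used, whereas the restriction $1-1/p<\alpha$ enters only through the applicability of Theorem \ref{divergencia}.
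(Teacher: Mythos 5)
Your proof is correct, and while its first half coincides with the paper's (apply Theorem \ref{divergencia} with $\beta=\alpha$, which is where the hypothesis $1-1/p<\alpha$ enters, then test the definition of $W_{const}^{1,p}(\o)$ with smooth functions supported away from the origin to conclude that the trace is constant on the connected set $\partial\o\setminus\{(0,0)\}$ and subtract a constant vector), the passage from constant trace to membership in $W^{1,p}_0(\o)^2$ follows a genuinely different route. The paper constructs an explicit approximation by functions compactly supported in $\o$: after splitting off the Lipschitz piece $\{x>1/3\}$, it multiplies by $1-\zeta(m(x^\gamma-|y|))$, i.e.\ it removes a boundary layer along the lateral curves at every $x$, and controls the resulting term of size $m^p$ through the trace-type inequality (\ref{T1}), itself obtained by integrating $\partial\psi/\partial y$ from the lateral boundary. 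You instead localize the whole difficulty at the cusp tip: on $\o\cap\{x>1/k\}$ you invoke the standard fact that a $W^{1,p}$ function with vanishing trace on a bounded Lipschitz domain lies in $W^{1,p}_0$, and the only quantitative ingredient is the one-dimensional Poincar\'e inequality on vertical segments, whose factor $x^{p/\alpha}\le Ck^{-p/\alpha}$ absorbs the $k^{p}$ coming from $|\nabla\eta_k|$ precisely because $1/\alpha\ge 1$. Your version is somewhat more economical (no boundary-layer construction, no change of variables), at the price of quoting the zero-trace characterization of $W^{1,p}_0$ on Lipschitz domains; both arguments ultimately rest on the same transverse one-dimensional integration exploiting the thinness of the cusp. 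Three small points to tighten: write the integration by parts with the tangential derivative falling on $\phi$ (as the paper does), since $\partial u/\partial t$ need not be a function on $\partial\o$; justify that the a.e.\ vertical-line restrictions of $\uu-{\bf c}$ vanish at the endpoints, e.g.\ by extending $\uu-{\bf c}$ by zero across the lateral boundary of the Lipschitz truncations and using the absolutely-continuous-on-lines characterization; and the parenthetical ``equivalently $\alpha<1$'' is inaccurate though harmless, since what is actually used is $1-1/\alpha\le 0$, i.e.\ $\alpha\le 1$, together with the absolute continuity of $\int|\partial_y\uu|^p$ over the shrinking strips.
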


\begin{proof} It is easy to see that $\o$ satisfies the hypotheses of
Theorem \ref{divergencia}. Therefore, it follows from that theorem
that there exists $\uu\in W_{const}^{1,p}(\o)^2$ which verifies (\ref{cusp1}).

We are going to prove that, for any
$\psi\in W_{const}^{1,p}(\o)$, there exists a constant $\psi_0\in\R$
such that
$$
\psi-\psi_0\in W^{1,p}_0(\o):=\overline{C_0^\infty(\o)}.
$$
Consequently, $\uu$ can be modified by adding a constant
to each of its components to obtain the desired solution. Indeed,
the estimate (\ref{cusp2}) will follow form (\ref{est1}) by the Poincar\'e
inequality.

Given $\psi\in W_{const}^{1,p}(\o)$, let us show first that $\psi$
is constant on $\partial\o$.
From the definition of $W_{const}^{1,p}(\o)$ we have that
$$
\int_\o \cu\psi\cdot\nabla\phi=0 \qquad \forall \phi\in W^{1,p'}(\o).
$$

Now, let $(x_0,y_0)$ be a point in $\partial\o$ different
from the origin and $B$ an open ball centered in $(x_0,y_0)$ such
that $0\notin B$. Taking $\phi\in C^\infty(B)$ we have

$$
0=\int_\o \cu\psi\cdot\nabla\phi
=-\int_{B\cap\partial\o} \psi\frac{\partial\phi}{\partial t}
\qquad \forall \phi\in C^\infty(B)
$$
where $\frac{\partial\phi}{\partial t}$ indicates the tangential derivative
of $\phi$.
Consequently $\frac{\partial\psi}{\partial t}=0$ in the distributional sense
on $B\cap\partial\o$ and then, since
$\partial\o-(0,0)$ is a connected set, we conclude that there exists a constant
$\psi_0$ such that $\psi=\psi_0$ on $\partial\o$. To simplify notation
we assume in what follows that $\psi_0=0$ and so, we have to see
that $\psi\in W^{1,p}_0(\o)$.

Now, let $\zeta\in C^\infty(\R_+)$ be such that
$$
\zeta\equiv 1\ {\rm in}\ [0,1]\hspace*{1cm}\zeta\equiv 0\ {\rm
in}{\mbox\ \R_+-(0,2)}\hspace*{1cm}0\leq\zeta\leq 1.
$$

We decompose $\psi$ as
$$
\psi(x,y)=\zeta(3x)\psi(x,y)+\left(1-\zeta(3x)\right)\psi(x,y)
=: \psi_1+\psi_2.
$$
It is easy to see that $\psi_2\in W_0^{1,p}(\o_2)$
where $\o_2$ is the Lipschitz domain
$$
\o_2\,:=\, \o\cap\Big{\{}x> \frac{1}{3}\Big{\}}.
$$

Thus, we can suppose that $\psi=\psi_1$.
Let now $\phi_n\in C^\infty(\o)$ be a sequence satisfying
$\phi_n\rightarrow \psi$ in $W^{1,p}(\o)$ and let
$\gamma:=1/\alpha$.

It is easy to check that, for $y\in(0,1)$,
$$
|\phi_n(x,x^{\gamma}-y)|\le |\phi_n(x,x^\gamma)|
+ \int_0^y\left|
\frac{\partial \phi_n}{\partial y}(x,x^{\gamma}-t)\right|\,dt.
$$

Therefore, integrating and using the H\"older inequality we have
\begin{eqnarray*}
\int_{y^\alpha}^1|\phi_n(x,x^\gamma-y)|^p\,dx
\leq C
\left(\int_{y^\alpha}^1|\phi_n(x,x^\gamma)|^p\,dx\,
+ y^{p-1}\int_{y^\alpha}^1\int_0^y\left|
\frac{\partial \phi_n}{\partial y}(x,x^{\gamma}-t)\right|^p\,dt\,dx\right).
\end{eqnarray*}

Thus, using the continuity of the trace in the Lipschitz
domain $\o\cap \{x>y^\alpha\}$ we have

\begin{eqnarray}
\label{T1}
\int_{y^\alpha}^1|\psi(x,x^\gamma-y)|^p\,dx&=&
\lim_{n\rightarrow\infty}\int_{y^\alpha}^1|\phi_n(x,x^\gamma-y)|^p\,dx\nonumber\\
&\leq&C\lim_{n\rightarrow\infty}\left(\int_{y^\alpha}^1|\phi_n(x,x^\gamma)|^p\,dx
\,+\,y^{p-1}\int_{y^\alpha}^1\int_0^y\left| \frac{\partial
\phi_n}{\partial y}(x,x^{\gamma}-t)\right|^p\,dt\,dx\right)\nonumber\\
&=&C\,y^{p-1}\int_{y^\alpha}^1\int_0^y\left|
\frac{\partial\psi}{\partial y}(x,x^{\gamma}-t)\right|^p\,dt\,dx.
\end{eqnarray}

Now we will show that the sequence $\psi_m$ defined by
$$
\psi_m(x,y):= \psi(x,y)\left(1-\zeta_m(x^\gamma-|y|)\right),
$$
where $\zeta_m(t):=\zeta(mt)$, converges to $\psi$ in
$W^{1,p}(\o)$. Moreover, it is easy to see that
$\mbox{supp\,}\psi_m\subset\o$.

By symmetry we can assume that $\o=\o\cap\{y>0\}$.
Using the dominated convergence theorem we obtain
$$
\lim_{m\rightarrow \infty}\|\psi-\psi_m\|_{L^p(\o)}^p
=\lim_{m\rightarrow\infty}
\int_\o\left|\psi(x,y)\zeta_m(x^\gamma-y)\right|^p=0.
$$
On the other hand,
$$
\frac{\partial \psi_m}{\partial x}(x,y)
=\frac{\partial\psi}{\partial x}(x,y)
\Big{(}1-\zeta_m(x^\gamma-y)\Big{)}-m\,\psi(x,y)\,\zeta^\prime\gamma
x^{\gamma-1}
$$
and then,
\begin{eqnarray*}
\int_\o\left|\frac{\partial\psi}{\partial x}
-\frac{\partial\psi_m}{\partial x}\right|^p \,
&\leq&\,\int_\o\left|\frac{\partial\psi}{\partial
x}(x,y)\,\zeta_m(x^\gamma-y)\right|^p
+ C
m^p\int_\o\left|\psi(x,y)\chi_{\{y>\psi(x)-2/m\}}\right|^p\\\\\,&=:&I\,+\,II.
\end{eqnarray*}

Thus, using again dominated convergence, it is easy to check that
$I\rightarrow 0$. So, it only remains to analyze $II$.

Now, by the change of variables defined by
$(x,y)\longmapsto(x,x^\gamma-y)$ and using (\ref{T1}) it
follows that
\begin{eqnarray*}
II\,&=&\,C\,m^p\,\int_0^{2/m}\int_{y^\alpha}^1|\psi(x,x^\gamma-y)|^p\,dx\,dy\,\\
&\leq&\,\,C\,m^p\,\int_0^{2/m}\,y^{p-1}\int_{y^\alpha}^1\int_0^y\left|
\frac{\partial\psi}{\partial y}(x,x^{\gamma}-t)\right|^p\,dt\,dx\,dy\\
&\leq&\,\,C\,m^p\,\int_0^{2/m}\,y^{p-1}\int_0^{2/m}\int_{t^\alpha}^1\left|
\frac{\partial \psi}{\partial y}(x,x^{\gamma}-t)\right|^p\,dx\,dt\,dy\\
&\leq&\,\,C\,m^p\,\left(\frac{2}{m}\right)^p\int_0^{2/m}\int_{t^\alpha}^1\left|
\frac{\partial \psi}{\partial y}(x,x^{\gamma}-t)\right|^p\,dx\,dt\\
&\leq&\,C\int_\o\left|\frac{\partial\psi}{\partial y}(x,y)
\chi_{\{y>\psi(x)-2/m\}}\right|^p\,\longrightarrow 0
\end{eqnarray*}

\bs An analogous argument can be applied to prove that
$\frac{\partial\psi_m}{\partial y}\to\frac{\partial \psi}{\partial y}$
in $L^p(\o)$.

Consequently, we conclude the proof by observing that $\psi_m$
belongs to $W^{1,p}_0(\o)$. \end{proof}

In the following theorem we show that the estimate (\ref{cusp2}) is
optimal in the sense that it is not possible to improve the power of
the distance in the right hand side. Recall that $p'=\frac{p}{p-1}$ is
the dual exponent of $p$.

\begin{theorem}
Let $\o$ be the domain defined in (\ref{ej}).
If $div:W_0^{1,p}(\o)^2\rightarrow L_0^p(\o,\b)$ admits
a continuous right inverse for some $\b\le 0$ then, $\b \le \a-1$.
\end{theorem}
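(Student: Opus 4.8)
The plan is to argue by contradiction, exploiting an explicit test function concentrated near the cusp. Suppose $\di:W_0^{1,p}(\o)^2\to L_0^p(\o,\b)$ admits a continuous right inverse with $\b\le 0$ but $\b>\a-1$. Then for every $f\in L_0^p(\o,\b)$ there is $\uu\in W_0^{1,p}(\o)^2$ with $\di\uu=f$ and $\|D\uu\|_{L^p(\o)}\le C\|f\|_{L^p(\o,\b)}$ (the full $W_0^{1,p}$-norm being controlled by the $L^p$-norm of the gradient via Poincar\'e, since $\o$ is bounded). The idea is to build a family of data $f_\ve$ supported near the origin, compute $\|f_\ve\|_{L^p(\o,\b)}$, and on the other hand bound $\|f_\ve\|_{L^p(\o,\b)}$ from below in terms of $\|D\uu_\ve\|_{L^p(\o)}$ through a duality/integration-by-parts identity; comparing the two growth rates as $\ve\to 0$ forces $\b\le\a-1$.

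Concretely, I would test the relation $\di\uu=f$ against a scalar weight. Pick a smooth cutoff $\eta\in C_0^\infty(0,1)$ and, for small $\ve>0$, consider $g_\ve(x)=\eta(x/\ve)$ depending only on $x$; then $\nabla g_\ve=(\eta'(x/\ve)/\ve,0)$. For $\uu=(u_1,u_2)\in W_0^{1,p}(\o)^2$ with $\di\uu=f$, integration by parts (legitimate because $\uu$ vanishes on $\partial\o$) gives
\begin{equation}
\int_\o f\,g_\ve = -\int_\o \uu\cdot\nabla g_\ve = -\frac1\ve\int_\o u_1(x,y)\,\eta'(x/\ve)\,dx\,dy.
\end{equation}
Now choose $f=f_\ve$ to be, up to subtracting its (tiny) mean value, the sign-changing function $y\,\chi_{\{x<\ve\}}$ or a similar profile that is odd in $y$ and forces $u_1$ to be of size comparable to the vertical thickness $x^{1/\a}$ on the slab $\{x\sim\ve\}$; the precise profile should be engineered so that $\int_\o f_\ve g_\ve$ is bounded below by a positive power of $\ve$ while the vertical width $x^{1/\a}$ enters both sides. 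The left-hand side is estimated above by $\|f_\ve\|_{L^p(\o,\b)}\|g_\ve d^{-\b}\|_{L^{p'}(\o)}$, and the right-hand side is bounded by $\frac1\ve\|u_1\|_{L^p(\text{slab})}|\text{slab}|^{1/p'}\le \frac{C}{\ve}\,\ve^{1/\a}\,\|Du_1\|_{L^p(\o)}\cdot(\text{width})^{\dots}$, using that $u_1$ vanishes on the cuspidal boundary so a one-dimensional Poincar\'e inequality in $y$ on the interval of length $\sim x^{1/\a}$ applies. Substituting $\|D\uu\|_{L^p}\le C\|f_\ve\|_{L^p(\o,\b)}$ and collecting the powers of $\ve$, the inequality can only persist as $\ve\to0$ if the exponent of $\ve$ on the gradient side is $\le$ the exponent on the data side, and bookkeeping of these exponents (involving $1/\a$, $\b$, $p$, $p'$ and the measure of the slab $|\{x<\ve\}\cap\o|\sim\ve^{1+1/\a}$) yields exactly $\b\le\a-1$.

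The main obstacle is choosing the test data $f_\ve$ so that the lower bound on $\int_\o f_\ve g_\ve$ is genuinely of the right order — one must prevent cancellation and make sure that the mean-value correction (needed so $f_\ve\in L_0^p$) is negligible, which is plausible since $|\o|$ is fixed and $f_\ve$ is supported in a region of measure $\to0$. A clean alternative, closer in spirit to Acosta's and Friedrichs' examples referenced in the introduction, is to exhibit directly a pressure-type function $q_\ve$ (e.g.\ $q_\ve = $ a normalized power of $x$ on $\{x<\ve\}$) for which the ratio $\|q_\ve\|_{L^{p'}(\o,-\b)} / \sup\{\int_\o q_\ve\di\vv : \|D\vv\|_{L^p}\le1,\ \vv\in W_0^{1,p}(\o)^2\}$ blows up unless $\b\le\a-1$; the supremum is computed by the same integration-by-parts identity and the one-dimensional Poincar\'e estimate across the cusp, and the blow-up of this ratio is precisely the obstruction to a bounded right inverse by the closed-range/duality characterization. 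Either route reduces the theorem to an elementary (if slightly delicate) power-counting in $\ve$, and I expect the sharp threshold $\b=\a-1$ to emerge from equating the scaling of $|\{x<\ve\}\cap\o|^{1/p'}$ against the gain $\ve^{1/\a}$ from the Poincar\'e inequality in the thin $y$-direction.
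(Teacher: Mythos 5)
Your underlying mechanism is the right one, and it is in fact the same one the paper exploits: pair $\di\uu=f$ with a function of $x$ alone, integrate by parts, and convert the resulting term in $u_1$ into a gradient term at the price of a factor bounded by the cusp width $x^{1/\a}$ (in the paper this is the step $\int_\o x^{-s}\di\vv_s=-s\int_\o y\,x^{-s-1}\frac{\partial v_{s,1}}{\partial y}$, where $|y|\le x^{1/\a}$ plays exactly the role of your one--dimensional Poincar\'e inequality across the cusp). The difference is that the paper runs this against the fixed family $f_s(x,y)=x^{-s/(p-1)}d(x,y)^{-p'\b}$ and compares the blow--up rates of $\|f_s\|_{L^p(\o,\b)}$ and $\|y\,x^{-s-1}\|_{L^{p'}(\o)}$ as $s$ approaches the critical exponent, whereas you propose a family concentrated at scale $\ve$ and a scaling argument.

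As written, however, your proposal has a genuine gap exactly where the theorem lives. First, the only data you actually specify, $f_\ve=y\,\chi_{\{x<\ve\}}$ ``or a similar profile odd in $y$'', cannot work: the domain (\ref{ej}) is symmetric in $y$ and your test function $g_\ve$ depends only on $x$, so $\int_\o f_\ve g_\ve=0$ identically and no lower bound by a power of $\ve$ is available. You need an even profile, and one adapted to the weight, e.g.\ $f_\ve=d^{-p'\b}\chi_{\{x<\ve\}}$ minus a mean correction supported away from the cusp; note that the plain characteristic function $\chi_{\{x<\ve\}}$ need not even belong to $L^p(\o,\b)$ when $\b p\le-1$, since $d^{\b p}$ is then non--integrable across the cusp. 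Second, the decisive exponent bookkeeping is never carried out: you assert it ``yields exactly $\b\le\a-1$'' and that you ``expect the sharp threshold to emerge'', but that computation is the entire content of the proof. For the record, with the even profile above it does close: $\int_\o f_\ve g_\ve\simeq\ve^{1+(1-\b p')/\a}$, while $|\int_\o f_\ve g_\ve|\le C\ve^{-1}\int_{\{x<\ve\}}|u_{\ve,1}|\le C\,\ve^{-1+1/p'+(p'+1)/(\a p')}\,\|D\uu_\ve\|_{L^p(\o)}$ by the slice Poincar\'e inequality, and inserting $\|D\uu_\ve\|_{L^p(\o)}\le C\|f_\ve\|_{L^p(\o,\b)}\simeq C\ve^{(1+(1-\b p')/\a)/p}$ and comparing powers of $\ve$ gives precisely $\b\le\a-1$. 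Finally, a minor point: the H\"older bound $\int_\o f_\ve g_\ve\le\|f_\ve\|_{L^p(\o,\b)}\,\|g_\ve d^{-\b}\|_{L^{p'}(\o)}$ plays no role in the correct chain of inequalities (you need a lower bound on the pairing and an upper bound on the velocity side), so it should be removed from the argument.
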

\begin{proof}
For $s<\frac{1-\b p'+\a}{\a p'}$ define
$f_s(x,y)=x^{-\frac{s}{p-1}}d(x,y)^{-p'\b}$.
Then, calling $\o_+=\o\cap \{y>0\}$, we have
$$
\|f_s\|_{L^p(\o,\beta)}^p
= 2 \int_{\o_+} x^{-sp'} d(x,y)^{-\b pp'+\b p}\, dx dy
= 2\int_{\o_+} x^{-sp'} d(x,y)^{-\b p'}\, dx dy
$$
and therefore, using that for $y>0$,
$d(x,y)\simeq x^{1/\alpha}-y$, we obtain
$$
\|f_s\|_{L^p(\o,\beta)}^p
\simeq 2\int_{\o_+} x^{-sp'} (x^{1/\alpha}-y)^{-\b p'}\, dx dy
$$
but,
$$
\int_{\o_+} x^{-sp'} (x^{1/\alpha}-y)^{-\b p'}\, dx dy
=\int_0^1\int_0^{x^{\frac1{\a}}} x^{-sp'} (x^{1/\alpha}-y)^{-\b p'}\, dy dx
$$
$$
=\frac1{1-\b p'}\int_0^1 x^{-sp'} x^{(1-\b p')/\a}\,dx
=\frac1{1-\b p'}\,\frac{1}{p'(\frac{1-\b p'+\a}{\a p'} - s)}
$$
where we have used $s<\frac{1-\b p'+\a}{\a p'}$.
Therefore,
\begin{equation}
\label{fs}
\|f_s\|_{L^p(\o,\beta)}^p
\simeq\frac1{A-s}
\end{equation}
where $A:=\frac{1-\b p'+\a}{\a p'}$ and with constants in the equivalence
independent of $s$.

Now, let $B$ be a ball such that $\overline B\subset\o$ and
$\omega\in C_0^\infty(B)$ such that $\int_B\omega=1$.
From our hypothesis we know that, if $c_s=\int_\o f_s$, there exists
$\vv_s\in W_0^1(\o)^2$
such that
$$
\di\vv_s=f_s-c_s\omega\hspace*{1cm}{\rm and}\hspace*{1cm}
\|\vv_s\|_{W_1^p(\o)}\le C\|f_s-c_s\omega\|_{L^p(\o,\b)}.
$$
But, since $\b\le 0$,
\begin{equation}
\label{cotacs}
|c_s|= \|f_s\|_{L^1(\o)}\le C \|f_s\|_{L^p(\o,\b)}
\end{equation}
and so,
\begin{equation}
\label{cotavs}
\|\vv_s\|_{W_1^p(\o)}\le C\|f_s\|_{L^p(\o,\b)}
\end{equation}
where we have used that $\|\omega\|_{L^p(\o,\b)}\le C$ because
the support of $\omega$ is contained in $B$.
Then,
\begin{eqnarray*}
\label{optimo1}
\|f_s\|^p_{L^p(\o,\b)}&=&\int_\o f^{p-1}_s\,(f_s-c_s\omega)\,d^{p\b}
+ \int_\o f^{p-1}_s\,c_s\omega\,d^{p\b}\nonumber\\
&=&\int_\o f^{p-1}_s \di\vv_s\,d^{p\b}+ \int_\o
f^{p-1}_s\,c_s\omega\,d^{p\b}\\
&=&\int_{\o}x^{-s}\,\di\vv_s+\int_\o
f^{p-1}_s\,c_s\omega\,d^{p\b}.\nonumber\\
\end{eqnarray*}
Using again that the support of $\omega$ is at a positive distance from the
boundary, together with (\ref{cotacs}), it follows that
$$
\int_\o f^{p-1}_s\,c_s\omega\,d^{p\b}
\le C\|f_s\|_{L^p(\o,\b)}.
$$
On the other hand,
\begin{eqnarray*}
\int_{\o}x^{-s}\,\di\vv_s&=&s\int_{\o}x^{-s-1}\,\vv_{s,1}
=s\int_{\o}\frac{\partial(y\,x^{-s-1})}{\partial y}\vv_{s,1}\\
&=&-s\int_{\o}y\,x^{-s-1}\frac{\partial \vv_{s,1}}{\partial y}\,
\le s\|y\,x^{-s-1}\|_{L^{p'}(\o)}\,\|\vv_s\|_{W_1^p(\o)}\\
&\le&Cs\|y\,x^{-s-1}\|_{L^{p'}(\o)}\,\|f_s\|_{L^p(\o,\b)}\\
\end{eqnarray*}
where for the last inequality we have used (\ref{cotavs}).

Therefore,
\begin{eqnarray}
\label{optimo2} \|f_s\|^{p-1}_{L^p(\o,\b)}
\le C\{s\|y\,x^{-s-1}\|_{L^{p'}(\o)}+1\}
\end{eqnarray}
But, an elementary computation shows that
\begin{equation}
\label{fs2}
\|y\,x^{-s-1}\|^{p'}_{L^{p'}(\o)}
\simeq\frac1{B-s}
\end{equation}
where $B:=\frac{1- (\a-1) p'+\a}{\a p'}$ and with constants in the equivalence
independent of $s$.

Thus, from (\ref{fs}), (\ref{optimo2}) and
(\ref{fs2}) we conclude that there exists a constant
independent of $s$ such that
$$
\frac1{A-s}\le C\frac1{B-s}
$$
therefore, $B\le A$ and it follows immediately that $\b\le\a-1$.
\end{proof}

\begin{remark}
If we put some part of the weight in the left hand side as in (\ref{est1}),
it is possible to prove a more general result, namely, under some restriction
on the exponents in the weights, the
difference between the powers in the right and left sides cannot be
less than $1-\a$ (see \cite{ADL2}).
\end{remark}

\section{An application to the Stokes equations}
\setcounter{equation}{0}
\label{application}

In this section we show how our results
can be applied to the analysis of the Stokes equations when
$\o$ is the domain defined in (\ref{ej}).

We are going to use the well known theory developed by Brezzi (see for example
\cite{BF,D2,GR}) but modifying the usual Hilbert spaces and the bilinear form
corresponding to the divergence free restriction in the weak formulation
of the Stokes equations.

\begin{theorem}
Let $\o$ be the domain defined in (\ref{ej}) with
$1/2<\a \le 1$. Then, if $\ff\in H^{-1}(\o)^2$, there exists a unique
weak solution $(u,p)\in H^1_0(\o)^2 \times L_0^2(\o,1-\a)$
of the Stokes equations (\ref{stokes}). Moreover, there exists a constant C
depending only on $\a$ such that
\begin{equation}
\label{cota solucion}
\|\vv\|_{H^1_0(\o)} + \|p\|_{L^2(\o,1-\a)}
\le C \|\ff\|_{H^{-1}(\o)}.
\end{equation}
\end{theorem}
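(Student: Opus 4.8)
The plan is to follow the abstract Brezzi framework for saddle-point problems, but with the pressure space replaced by the weighted space $L_0^2(\o,1-\a)$ and the divergence bilinear form adapted accordingly. Concretely, I would set $V = H_0^1(\o)^2$ with the usual inner product, $Q = L_0^2(\o,1-\a)$ with the weighted inner product $\langle p,q\rangle_Q = \int_\o p\,q\,d^{2(1-\a)}$, and introduce the bilinear forms $a(\uu,\vv) = \int_\o D\uu : D\vv$ on $V\times V$ and $b(\vv,q) = -\int_\o \di\vv\; q\; d^{2(1-\a)}$ on $V\times Q$. Coercivity of $a$ on all of $V$ is immediate from the Poincar\'e inequality, so the only nontrivial hypothesis of the Brezzi theorem is the inf-sup (LBB) condition: there exists $\kappa>0$ with
\[
\inf_{q\in Q}\;\sup_{\vv\in V}\;\frac{b(\vv,q)}{\|\vv\|_{H_0^1}\,\|q\|_{Q}}\;\ge\;\kappa.
\]

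The key step is to verify this inf-sup condition using Theorem \ref{divergencia2} (equivalently the case $\b=\a$ of Theorem \ref{divergencia} combined with Theorem \ref{divergencia2}). Given $q\in Q = L_0^2(\o,1-\a)$, I want to produce $\vv\in H_0^1(\o)^2$ with $\di\vv$ controlling $q$ in the weighted pairing. The natural choice is to apply Theorem \ref{divergencia2} with $p=2$: since $1/2<\a\le 1$ we have $1-1/p = 1/2 < \a$, so the hypothesis $1-1/p<\a$ holds, and the theorem gives, for any $f\in L_0^2(\o,\a-1)$, a solution $\uu\in H_0^1(\o)^2$ of $\di\uu = f$ with $\|\uu\|_{H_0^1}\le C\|f\|_{L^2(\o,\a-1)}$. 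I would apply this with $f = q\,d^{2(1-\a)}$: this lies in $L^2(\o,\a-1)$ precisely because $\|q\,d^{2(1-\a)}\|_{L^2(\o,\a-1)} = \|q\,d^{2(1-\a)}d^{\a-1}\|_{L^2(\o)} = \|q\,d^{1-\a}\|_{L^2(\o)} = \|q\|_Q < \infty$, and it has vanishing mean value since $q\in Q$ already has $\int_\o q\,d^{2(1-\a)}=0$ by definition of the weighted mean-value-zero subspace (I should make sure $L_0^2(\o,1-\a)$ is defined with the weighted mean value, which is the natural convention here). Then $b(\uu,q) = -\int_\o \di\uu\,q\,d^{2(1-\a)} = -\int_\o q^2 d^{2(1-\a)} = -\|q\|_Q^2$, while $\|\uu\|_{H_0^1}\le C\|f\|_{L^2(\o,\a-1)} = C\|q\|_Q$, so $\sup_\vv b(\vv,q)/\|\vv\|_{H_0^1} \ge |b(\uu,q)|/\|\uu\|_{H_0^1}\ge \|q\|_Q^2/(C\|q\|_Q) = \|q\|_Q/C$, which is exactly the inf-sup bound with $\kappa = 1/C$.

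With coercivity of $a$ and the inf-sup condition in hand, Brezzi's theorem yields existence and uniqueness of $(\uu,p)\in V\times Q = H_0^1(\o)^2\times L_0^2(\o,1-\a)$ solving the variational problem
\[
a(\uu,\vv) + b(\vv,p) = \langle \ff,\vv\rangle \quad\forall \vv\in V,
\qquad b(\uu,q)=0\quad\forall q\in Q,
\]
together with the stability estimate $\|\uu\|_{H_0^1} + \|p\|_Q \le C\|\ff\|_{H^{-1}}$, which is (\ref{cota solucion}) since $\|p\|_Q = \|p\|_{L^2(\o,1-\a)}$. Finally I would check that this variational solution is a weak solution of (\ref{stokes}) in the intended sense: the second equation $b(\uu,q)=0$ for all $q\in Q$ forces $\di\uu = 0$ a.e.\ (testing against $q = \di\uu - c$ with the constant $c$ chosen so that the weighted mean vanishes, then noting $\di\uu$ itself has the right structure), and the first equation is the weak form of $-\Delta\uu + \nabla p = \ff$ once one interprets the pressure term correctly — here a small point to address is that the test/trial asymmetry coming from the weight $d^{2(1-\a)}$ in $b$ means the recovered pressure relation is $-\Delta\uu + \nabla(p\,d^{2(1-\a)}) = \ff$ in the distributional sense, i.e.\ the "physical" pressure is $p\,d^{2(1-\a)}$; I would state the weak formulation so that this is transparent and consistent with the claimed solution pair.

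The main obstacle I expect is purely bookkeeping around the weight in the bilinear form $b$: one must be careful about which space carries the weight, that the mean-value-zero condition defining $L_0^2(\o,1-\a)$ is the weighted one so that it matches the constraint produced by the divergence operator, and that the final "weak solution of the Stokes equations" is phrased compatibly with the weighted pressure (so that uniqueness is genuine and the estimate is the stated one). None of the analytic input is hard once Theorem \ref{divergencia2} is available — it supplies exactly the bounded right inverse of the divergence, weighted correctly, that the inf-sup condition needs, and the restriction $1/2<\a\le 1$ is precisely what makes $p=2$ admissible there.
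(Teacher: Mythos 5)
Your overall strategy (Brezzi's theory plus the weighted right inverse of Theorem \ref{divergencia2}) is the same as the paper's, but the way you distribute the weight between the spaces breaks down at the decisive step. In your verification of the inf-sup condition you take $\di\uu=f=q\,d^{2(1-\a)}$ and then compute $b(\uu,q)=-\int_\o q^2\,d^{2(1-\a)}=-\|q\|_Q^2$; in fact $b(\uu,q)=-\int_\o q^2\,d^{4(1-\a)}$, which is bounded above by $C\|q\|_Q^2$ but not below, so the inf-sup bound does not follow. Moreover this is not a repairable slip within your choice of spaces: with $V=H_0^1(\o)^2$ carrying its usual norm, $Q=L^2_0(\o,1-\a)$ and $b(\vv,q)=-\int_\o \di\vv\,q\,d^{2(1-\a)}$, the inf-sup condition amounts, by the standard duality between the inf-sup condition and a bounded right inverse of the operator $\vv\mapsto b(\vv,\cdot)$, to the solvability of $\di\vv=f$ with $\|\vv\|_{H_0^1(\o)}\le C\|f\|_{L^2(\o,1-\a)}$. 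Since $1-\a\ge 0$, the norm $\|f\|_{L^2(\o,1-\a)}$ is weaker than the unweighted $L^2$ norm, so such an estimate would in particular produce a continuous right inverse of $\di$ with data measured in $L^2(\o)$; by the optimality theorem of Section \ref{external cusps} (applied with $\b=0$, which would force $0\le\a-1$) this is impossible for $\a<1$. So the inf-sup condition you need is simply false, and the restriction $1/2<\a$ cannot rescue it. (A further warning sign: had your scheme worked, the distributional pressure $p\,d^{2(1-\a)}$ would lie in $L^2(\o,\a-1)$, which is strictly smaller than $L^2(\o,1-\a)$, i.e.\ you would be proving something stronger than the theorem.)

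The paper puts the weight the other way around. The auxiliary pressure space is $Q=L^2_0(\o,\a-1)$, the form is $b(\vv,q)=\int_\o \di\vv\,q\,d^{2\a-2}$, and, since the weight $d^{2\a-2}$ is unbounded, the velocity space is shrunk to $V=\{\vv\in H^1_0(\o)^2:\ \di\vv\in L^2(\o,\a-1)\}$ equipped with the graph norm, so that $b$ is continuous. With this choice Theorem \ref{divergencia2} applies verbatim: given $r\in L^2_0(\o,\a-1)$ one gets $\ww$ with $\di\ww=r$, $\|\ww\|_{H^1_0(\o)}\le C\|r\|_{L^2(\o,\a-1)}$, hence $b(\ww,r)=\|r\|_Q^2$ and $\|\ww\|_V\le C_1\|r\|_Q$, which is exactly the inf-sup condition; the physical pressure is then recovered a posteriori as $p=q\,d^{2\a-2}\in L^2(\o,1-\a)$, yielding the stated estimate. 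The issue you raise about weighted versus unweighted zero mean in the pressure space is, by contrast, only a normalization by an additive constant and is harmless; the genuine gap is the inf-sup computation described above.
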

\begin{proof}
Let us introduce the spaces
$$
V=\Big\{\vv\in H^1_0(\o)^2\,:\,\di\vv\in L^2(\o,\alpha-1)\Big\}
$$
which is a Hilbert space with the norm
$$
\|\vv\|^2_V:=\|\vv\|^2_{H_0^1(\o)}+\|\di\vv\|^2_{L^2(\o,\a-1)},
$$
and
$$
Q=L^2_0(\o,\alpha-1).
$$
Define the bilinear forms $a:V\times V\rightarrow \R$
and $b:V\times Q\rightarrow \R$ by
$$
a(\uu,\vv)=\int_\o D\uu:D\vv
$$
and
$$
b(\vv,q)=\int_\o \di\vv\,q\,d^{2\a-2}.
$$
We are going to show that the problem
\begin{eqnarray}
\label{stokes2}
a(\uu,\vv) + b(\vv,q)&=&\int_\o f\cdot \vv \qquad \forall\vv\in V\\
b(\uu,r)\,\qquad\qquad &=&0 \qquad \qquad\ \ \forall r\in Q
\label{stokes3}
\end{eqnarray}
has a unique solution $(\uu,q)\in V\times Q$.

Using the Schwarz inequality it is easy to check that
the bilinear forms $a$ and $b$ are continuous and, since
$\ff\in H^{-1}(\o)^2$,
that the linear functional defined by the right hand side of (\ref{stokes2})
is continuous.

Let
$$
W=\Big\{\vv\in V\,:\,b(\vv,r)=0\ \forall r\in Q\Big\}.
$$
According to Brezzi's theory
it is enough to see that $a$ is coercive in $W$ and $b$ satisfies
the inf-sup condition
\begin{equation}
\label{infsup}
\inf_{r\in Q}\ \sup_{\vv\in V}
\frac{b(\vv,r)}{\|r\|_Q\,\|\vv\|_V} > 0
\end{equation}

Since $\di V\subset Q$ we can take $r=\di\vv$ in the equation $b(\vv,r)=0$
and conclude that $W=\{\vv\in H^1_0(\o)^2\,:\,\di\vv=0\}$. Therefore,
coerciveness of $a$ in $W$ follows from the Poincar\'e inequality.

On the other hand, from Theorem \ref{divergencia2} we know that
given $r\in L^2_0(\o,\alpha-1)$ there exists $\ww\in H_0^1(\o)$
such that
$$
\di\ww=r
\qquad \mbox{and} \qquad
\|\ww\|_{H_0^1(\o)}\le C\|r\|_{L^2_0(\o,\alpha-1)}
$$
where $C$ is a positive constant which depends only on $\a$.
Moreover, from the definition of the norm in $V$ it follows immediately
that
$$
\|\ww\|_V\le C_1\|r\|_Q
$$
for another constant depending only on $\a$.
Then,
$$
\sup_{\vv\in V}
\frac{b(\vv,r)}{\|r\|_Q\,\|\vv\|_V}
\ge \frac{\int_\o \di\ww\,r\,d^{2\alpha-2}}{\|r\|_Q\,\|\ww\|_V}
=\frac{\|r\|_Q}{\|\ww\|_V}\ge C_1^{-1}
$$
and therefore the inf-sup condition (\ref{infsup}) is proved.

Summing up we have proved that the problem given in (\ref{stokes2}) and
(\ref{stokes3}) has a unique solution $(\uu,q)\in V\times Q$. Moreover,
it follows also from the general theory that there exists a constant
$C$ depending only on $C_1$ such that
\begin{equation}
\label{cota solucion2}
\|\vv\|_V + \|q\|_Q
\le C \|\ff\|_{H^{-1}(\o)}.
\end{equation}

Now, define $p=q\,d^{2\a-2}$. It easy to see that $p\in L^2(\o,1-\alpha)$ and
moreover, it follows from (\ref{stokes3}) that $\di\uu=0$ and from
(\ref{stokes2}) that $(\uu,p)$ verifies
$$
\int_\o D\uu:D\vv - \int_\o \di\vv\,p=0 \qquad \forall\vv\in V.
$$
Therefore, since $C_0^\infty(\o)\subset V$, $(\uu,p)$ is a solution
of the Stokes equations (\ref{stokes}) in the sense of distributions
as we wanted to prove. Finally, since $\|p\|_{L^2(\o,1-\alpha)}=\|q\|_Q$,
(\ref{cota solucion}) follows immediately
from (\ref{cota solucion2}).
\end{proof}

We end this section with a corollary which gives an estimate for the pressure
in a standard $L^r$-norm.

\begin{corollary}
Let $\o$ be the domain defined in (\ref{ej}) with
$1/2<\a \le 1$ and $(\uu,p)\in H^1_0(\o)^2 \times L_0^2(\o,1-\a)$
be the solution of the Stokes equations (\ref{stokes}).
If $\ff\in H^{-1}(\o)^2$ and $1\le r<2/(3-2\a)$ then
$(\uu,p)\in H^1_0(\o)^2 \times L^r(\o)$.
Moreover, there exists a constant C depending only on $\a$ such that
$$
\|\uu\|_{H^1_0(\o)} + \|p\|_{L^r(\o)}
\le C \|\ff\|_{H^{-1}(\o)}
$$
\end{corollary}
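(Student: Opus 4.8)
We already have the solution $(\uu,p)\in H^1_0(\o)^2\times L^2_0(\o,1-\a)$ together with the estimate
$$
\|\uu\|_{H^1_0(\o)}+\|p\|_{L^2(\o,1-\a)}\le C\|\ff\|_{H^{-1}(\o)}.
$$
Since the bound on $\|\uu\|_{H^1_0(\o)}$ is already part of this estimate, the only new thing to prove is that $p\in L^r(\o)$ with $\|p\|_{L^r(\o)}\le C\|p\|_{L^2(\o,1-\a)}$ whenever $1\le r<2/(3-2\a)$. In other words, the task reduces to showing the continuous embedding $L^2(\o,1-\a)\hookrightarrow L^r(\o)$ for that range of $r$.

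\medskip

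\textbf{Key step: a weighted H\"older inequality.} Given $g\in L^2(\o,1-\a)$, write $|g|=|g|\,d^{1-\a}\cdot d^{\a-1}$ and apply H\"older's inequality with exponents $2/r$ and $(2/r)'=2/(2-r)$:
$$
\int_\o |g|^r
=\int_\o \bigl(|g|\,d^{1-\a}\bigr)^r\,d^{(\a-1)r}
\le \Bigl(\int_\o |g|^2 d^{2(1-\a)}\Bigr)^{r/2}
\Bigl(\int_\o d^{(\a-1)r\cdot\frac{2}{2-r}}\Bigr)^{\frac{2-r}{2}}.
$$
So $\|g\|_{L^r(\o)}\le \|g\|_{L^2(\o,1-\a)}\,\bigl(\int_\o d^{\frac{2(\a-1)r}{2-r}}\bigr)^{(2-r)/(2r)}$, and everything comes down to checking that the last integral is finite.

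\medskip

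\textbf{Finiteness of the weight integral.} Set $\theta=\frac{2(\a-1)r}{2-r}$ (note $\theta\le 0$ since $\a\le1$). On the cuspidal domain $\o$ of \eqref{ej} one has $d(x,y)\simeq x^{1/\a}-|y|$ for points near the cusp, and a direct computation as in the proof of the optimality theorem gives, for $\theta>-1$,
$$
\int_\o d^{\theta}\,dx\,dy
\simeq \int_0^1\int_0^{x^{1/\a}}(x^{1/\a}-y)^{\theta}\,dy\,dx
= \frac{1}{1+\theta}\int_0^1 x^{(1+\theta)/\a}\,dx<\infty,
$$
the last integral converging because $(1+\theta)/\a>-1$ already when $1+\theta>0$ (as $0<\a\le1$). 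Thus the only requirement is $\theta>-1$, i.e. $\frac{2(\a-1)r}{2-r}>-1$, which upon clearing the (positive) denominator $2-r$ rearranges to $r(3-2\a)<2$, that is $r<2/(3-2\a)$ — exactly the stated hypothesis. For $r=1$ one may alternatively note $d$ is bounded on $\o$ and $\theta=2(\a-1)$ is already $>-1$ precisely when $\a>1/2$, consistent with the standing assumption. I expect \emph{no} serious obstacle here: the argument is the weighted H\"older inequality followed by the elementary cuspidal integral computation already carried out in Section \ref{external cusps}. Combining the weighted embedding estimate with \eqref{cota solucion} yields
$$
\|\uu\|_{H^1_0(\o)}+\|p\|_{L^r(\o)}\le C\|\ff\|_{H^{-1}(\o)},
$$
with $C$ depending only on $\a$ (and on $r$, but $r$ is itself constrained by $\a$), which is the assertion of the corollary.
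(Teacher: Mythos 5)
Your proof is correct and follows essentially the same route as the paper: the weighted H\"older inequality with exponent $2/r$ reducing matters to the finiteness of $\int_\o d^{2(\a-1)r/(2-r)}$, which holds because the exponent exceeds $-1$ exactly when $r<2/(3-2\a)$, using $d(x,y)\simeq x^{1/\a}-|y|$. The only difference is that you spell out the cuspidal integral computation, which the paper leaves as an observation.
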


\begin{proof} We only have to prove that $p\in L^r(\o)$ and
that
\begin{equation}
\label{cotapLr}
\|p\|_{L^r(\o)}\le C \|\ff\|_{H^{-1}(\o)}.
\end{equation}

Observe that $\int_o d^{\b} < +\infty$ for any $\b>-1$. Indeed,
this follows easily by using that $d(x,y)\simeq x^{1/\alpha}-|y|$.
Then, applying the H\"older inequality with exponent $2/r$, we have
$$
\|p\|^r_{L^r(\o)}=\int_\o |p|^r d^{(1-\a)r} d^{(\a-1)r}
\le \|p\|^r_{L^2(\o,1-\alpha)} \left(\int_\o
d^{\frac{2(\a-1)r}{2-r}}\right)^{\frac{2-r}2}
$$
but the integral in the right hand side is finite because
$(2(\a-1)r)/(2-r)>-1$.
So $\|p\|_{L^r(\o)}\le C \|p\|_{L^2(\o,1-\alpha)}$ and therefore,
(\ref{cotapLr})
follows immediately from (\ref{cota solucion}).
\end{proof}

\section{Appendix}
\setcounter{equation}{0}
\label{appendix}

To prove Lemma \ref{Ap} we will work with Whitney decompositions.
If $F$ is a compact non-empty subset of $\R^n$, then
$\R^n\setminus F$ can be represented as a union of closed dyadic
cubes with pairwise disjoint interior $Q^k_j$ satisfying
\begin{eqnarray}
\label{whitney}
\R^n\setminus F\,=\,\bigcup_{k\in\Z}\bigcup_{j=1}^{N_k}Q^k_j
\end{eqnarray}
where the edge length of $Q^k_j$ is $2^{-k}$. The decomposition
(\ref{whitney}) is called a {\it Whitney decomposition of
$\R^n\setminus F$} and the collection $\{Q_j^k\,:\,j=1,...,N_k\}$ is called the
$k^{th}$ generation of Whitney cubes. Furthermore, the Whitney cubes
satisfy
$$
\ell_k\le d(Q_j^k,F)\le 4 \ell_k
$$
where $d(Q_j^k,F)$ denotes the distance of the cube to $F$ and
$\ell_k$ the diameter of $Q_j^k$ (see for example \cite{S}).

For $x_0\in F$ and $R>0$, $N_k\left( B(x_0,R) \right)$ denotes the number of
Whitney cubes
of $F^c$ in the $k^{th}$ generation contained in $B(x_0,R)$.

\begin{lemma}
\label{m-set} Let $F\subset\R^n$ be a compact $\mbox{m-set}$.
Given $x_0\in F$ and $0<R<diam(F)/3$, there exists a constant $C$
depending only on $F$ such that

$$
N_k\left(B(x_0,R)\right)\,\leq\, C\,R^m\,2^{km}
$$
\end{lemma}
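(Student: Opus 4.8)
The plan is to count the generation-$k$ Whitney cubes lying in $B(x_0,R)$ by pinning each of them to a nearby point of $F$, and then to control the number of such anchor points via the $m$-set property of $F$. First I would dispose of a trivial case: a cube of the $k^{th}$ generation has diameter $\ell_k$, so no such cube can be contained in $B(x_0,R)$ unless $\ell_k\le 2R$, and I assume this from now on (in particular $2^{-k}\le\ell_k\le 2R< diam\,F$, using the hypothesis $R<diam\,F/3$). For each generation-$k$ Whitney cube $Q\subset B(x_0,R)$, the Whitney estimate $d(Q,F)\le 4\ell_k$ together with the compactness of $F$ and $\overline Q$ lets me choose a point $w_Q\in F$ with $Q\subset B(w_Q,6\ell_k)$; combining this with $Q\subset B(x_0,R)$ and $\ell_k\le 2R$ gives $w_Q\in F\cap B(x_0,C_1R)$ for an absolute constant $C_1$.

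Next I would fix a maximal $2^{-k}$-separated subset $\{z_i\}_{i=1}^{M}$ of $F\cap B(x_0,C_1R)$. By maximality each $w_Q$ lies within $2^{-k}$ of some $z_i$, so the corresponding cube satisfies $Q\subset B(z_i,C_2\,2^{-k})$ with $C_2$ depending only on $n$. Since the cubes of a fixed generation have pairwise disjoint interiors and each has $n$-dimensional Lebesgue measure $2^{-kn}$, at most $C_3$ of them — with $C_3$ depending only on $n$ — can be contained in a single ball $B(z_i,C_2\,2^{-k})$; hence $N_k(B(x_0,R))\le C_3\,M$. It therefore remains to bound $M$.

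For this I would use the $m$-set property in both directions. The balls $B(z_i,2^{-k-1})$ are pairwise disjoint, and since $z_i\in F$ and $2^{-k-1}\le 2R< diam\,F$, the lower bound in the definition of $m$-set gives ${\mathcal H}^m\big(B(z_i,2^{-k-1})\cap F\big)\ge C^{-1}2^{-(k+1)m}$. Summing over $i$ and using that these sets are disjoint subsets of $B(x_0,2C_1R)\cap F$,
$$
M\,C^{-1}2^{-(k+1)m}\ \le\ {\mathcal H}^m\big(B(x_0,2C_1R)\cap F\big).
$$
If $2C_1R\le diam\,F$, the upper bound in the definition of $m$-set bounds the right-hand side by $C(2C_1R)^m$; otherwise $diam\,F<2C_1R$, and I bound it by ${\mathcal H}^m(F)\le C(diam\,F)^m\le C(2C_1R)^m$, the first inequality being the $m$-set upper bound applied with radius $diam\,F$. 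In either case $M\le C\,R^m 2^{km}$, and combined with $N_k(B(x_0,R))\le C_3 M$ this is exactly the asserted estimate.

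The routine part here is the elementary geometry of Whitney cubes (diameters, distances to $F$, and the volume–packing count in the second step). The one point that needs a little care — and the only place where the hypothesis $R<diam\,F/3$ really intervenes — is ensuring that every ball to which the $m$-set inequalities are applied has radius at most $diam\,F$; this is why the bound for $M$ is split into the two cases above, and why one first reduces to $\ell_k\le 2R$, so that $2^{-k}$ stays comparable to $R$ and hence stays below $diam\,F$.
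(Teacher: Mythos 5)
Your proof is correct and follows essentially the same route as the paper's: you anchor each generation-$k$ Whitney cube in $B(x_0,R)$ to a nearby point of $F$, cover those anchors by balls of radius comparable to $2^{-k}$ centered on $F$, bound the number of such balls via the two-sided $m$-set estimates (lower bound on the disjoint small balls, upper bound on the large ball), and note that each small ball contains only boundedly many cubes of that generation. The only differences are presentational: your maximal $2^{-k}$-separated net plays the roles of both the covering quantity $H_m$ and the packing quantity $P$ in the paper's argument, and your case distinction (together with the reduction to $\ell_k\le 2R$) keeps all radii below $\mathrm{diam}\,F$, where the paper instead works with $B(x_0,3R)$ and uses the hypothesis $R<\mathrm{diam}(F)/3$ directly.
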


\begin{proof} The idea is to use that the number of Whitney
cubes of $F^c$ in the $k^{th}$ generation
contained in a ball $B$ is
essentially the number of balls of radius $2^{-k}$
necessary to cover $F\cap B$.

Let $Q^k$ be a Whitney cube in the $k^{th}$ generation
contained in $B(x_0,R)$. Then, it is easy to check that
$$d(Q^k,F)=d(Q^k,F\cap B(x_0,2R)).$$

Suppose there exist balls $B(x_i,2^{-k})$ with $x_i\in F$,
for $1\leq i\leq N$, satisfying the following properties
\begin{eqnarray}
\label{bolas} F\cap
B(x_0,2R)\subseteq\bigcup_{i=1}^NB(x_i,2^{-k})\hspace*{1cm}{\rm
and }\hspace*{1cm} N\,\leq\,C\,R^m\,2^{km}.
\end{eqnarray}

Thus, if $y_Q\in F$ is a point satisfying $d(Q^k,F)=d(Q^k,y_Q)$ we
can conclude that there is $x_i$, for some $1\leq i\leq N$, such
that $y_Q\in B(x_i,2^{-k}).$

So, using that $Q^k$ is a Whitney cube in the $k^{th}$ generation
it follows that
$$
Q^k\subset B(x_i,6\ell_k),
$$
where $\ell_k$ is the diameter of $Q^k$. But, $B(x_i,6\ell_k)$ cannot
contain more than a finite number $c(n)$ of Whitney cubes $Q^k$.
Then, by (\ref{bolas}) it follows that
$$
N_k\left( B(x_0,R) \right)\,
\le c(n)N \le C\,R^m\,2^{km}.
$$
Thus, to complete the proof we have to show (\ref{bolas}).
Let $r=2^{-k}$.
For $F_0:=F\cap B(x_0,2R)$ we define the numbers
$$
H_m(F_0,r):=\min\Big{\{}Nr^m\,:\,F_0\subseteq\bigcup_{i=1}^N
B(x_i,r)\Big{\}}
$$
and
$$
P(F_0,r):=\max\Big{\{}N\,:\,{\rm there\ exists\ disjoint\ balls\
}B(x_i,r),\,i=1,\dots, N,
{\rm with}\ x_i\in F_0\Big{\}}.
$$

Then, using that $F$ is an $\mbox{m-set}$ we have
\begin{eqnarray*}
H_m(F_0,r)\,&\leq&\,P\left(F_0,\frac{r}{2}\right)r^m\,=\,2^m\,P\left(F_0,\frac{r}{2}\right)
\left(\frac{r}{2}\right)^m\,\\
&<&\,2^m\,C\sum_{i=1}^{P(F_0,r/2)}{\mathcal H}^m\left(B\left(x_i,\frac{r}{2}\right)\cap
F\right)\\
&=&\,2^m\,C\sum_{i=1}^{P(F_0,r/2)}{\mathcal H}^m\left(B\left(x_i,\frac{r}{2}\right)\cap
F\cap B(x_0,3R)\right)\\
&\leq&\,2^m\,C\,{\mathcal H}^m\left(F\cap B(x_0, 3R)\right)\,<\,C^2 6^m
\,R^m.
\end{eqnarray*}

Thus, using the definition of $H_m(F_0,r)$ we obtain
(\ref{bolas}), concluding the proof.
\end{proof}

Before proving Lemma \ref{Ap} let us recall the definition
of the Muckenhaupt class $A_p$. For $1<p<\infty$ a non-negative
function $w$ is in $A_p$ if
\begin{eqnarray}
\sup_{ B\subset\R^n}\left(\frac{1}{|B|}\int_B
w(x)\,dx\right)\left(\frac{1}{|B|}\int_B
w(x)^{-\frac{1}{p-1}}\,dx\right)^{p-1}<\infty.
\end{eqnarray}
where the supremum  is taken over all the balls $B$.

\bigskip

\noindent{\bf Proof of Lemma \ref{Ap}:}
Let $B$ be a ball in $\R^n$, $r_B$ its radius and $d(B)$ the
distance of $B$ to $\partial\o$.

If $r_B\le d(B)$, given $x$ in $B$ we have
$d(B)\le d(x)\le 3d(B).$
Then,
\begin{eqnarray*}
\left(\frac{1}{|B|}\int_B
d^{\mu}\right)\left(\frac{1}{|B|}\int_B
d^{-\frac{\mu}{p-1}}\right)^{p-1}\,\leq
C\,\left(\frac{1}{|B|}\int_B
d(B)^{\mu}\right)\left(\frac{1}{|B|}\int_B
d(B)^{-\frac{\mu}{p-1}}\right)^{p-1}\,\leq C\,
\end{eqnarray*}

On the other hand, if $r_B\ge d(B)$, there exists
$x_0\in\partial\o$ such that $B\subseteq B(x_0,3 r_B)$. Then,
without loss of generality, we can assume that $B$ is centered at a
point of $\partial\o$.

Now, from the Whitney decomposition of ${\partial\o}^c$ we
have
\begin{eqnarray*}
\label{Apprmi} \left(\frac{1}{|B|}\int_B
d^{\mu}\right)\left(\frac{1}{|B|}\int_B
d^{-\frac{\mu}{p-1}}\right)^{p-1}
\le C r_B^{-np}\left(\sum_{Q^k}\int_{Q^k}
d^{\mu}\right)\left(\sum_{Q^k}\int_{Q^k}
d^{-\frac{\mu}{p-1}}\right)^{p-1}=: {\rm I}
\end{eqnarray*}
where the sum is taken over all Whitney cubes $Q^k$ intersecting $B$.
There is no loss
of generality in assuming that the Whitney cubes are contained in $B$.

Observe that if $Q^k$ is contained in $B$ then
$2^{-k}\le\frac{1}{\sqrt{n}}r_B$. We call $k_0$ the
minimum $k$ satisfying this inequality. Then, it is easy to see that
$2^{-k_0}\simeq r_B$.

Now, using that $d(x)\simeq d(Q^k)\simeq 2^{-k}$ for every $x\in
Q^k$ and Lemma \ref{m-set} we obtain

\begin{eqnarray*}
{\rm I}&\le&C r_B^{-np}\left(\sum_{Q_k}
2^{-k\mu}2^{-kn}\right)\left(\sum_{Q_k} 2^{\frac{\mu
k}{p-1}}2^{-kn}\right)^{p-1}\\&\le&C
r_B^{-np}\left(\sum_{k=k_0}^\infty
N_k(B(x_0,r_B))2^{-k\mu}2^{-kn}\right)\left(\sum_{k=k_0}^\infty
N_k(B(x_0,r_B))2^{\frac{\mu k}{p-1}}2^{-kn}\right)^{p-1}\\
&\leq&Cr_B^{-np}\left(\sum_{k=k_0}^\infty
r_B^{m}2^{-k(\mu+n-m)}\right)\left(\sum_{k=k_0}^\infty
r_B^{m}2^{-k\left(n-m-\frac{\mu}{p-1}\right)}\right)^{p-1}={\rm II}.
\end{eqnarray*}
Then, since $-(n-m)<\mu<(p-1)(n-m)$, we obtain

\begin{eqnarray*}
{\rm II}&\le&C\,r_B^{-p(n-m)}\left(2^{-k_0(\mu+n-m)}\right)
\left(2^{-k_0\left(n-m-\frac{\mu}{p-1}+1\right)}\right)^{p-1}
\le C\,r_B^{-p(n-m)}\left(2^{-k_0}\right)^{p(n-m)}
\le C
\end{eqnarray*}
and therefore the Lemma is proved.\Box

\end{document}